\newtheorem{theorem}{Theorem}[section]
\newtheorem{lemma}[theorem]{Lemma}
\theoremstyle{definition}
\newtheorem{definition}[theorem]{Definition}
\theoremstyle{remark}
\newtheorem{thm}{Theorem}[section]
\newtheorem{corollary}[thm]{Corollary}
\theoremstyle{definition}
\theoremstyle{remark}
\def\offset{a}
\def\Mnminus{M^-_n}
\def\Mnplus{M^+_n}
\def\Cnminus{C^-_n}
\def\Cnplus{C^+_n}
\title[On the minimum of the mean-squared error in 2-means clustering]{On the minimum of the mean-squared error\\ in 2-means clustering}
\author{Bernhard G. Bodmann and Craig J. George}\thanks{This paper was supported in part by an REU portion of NSF grant DMS-1412524.}
\address{Mathematics Department, University of Houston, Houston, TX 77204-3008.}
\begin{document}

\begin{abstract}
We study the minimum mean-squared error for 2-means clustering
when the outcomes of the vector-valued random variable to be clustered are on two touching spheres 
of unit radius in $n$-dimensional Euclidean space
and the underlying probability distribution is the normalized surface measure. 
For simplicity, we only consider the asymptotics of large sample sizes and 
replace empirical samples by the probability measure.
The concrete question addressed here is whether a minimizer for the 
mean-squared error identifies the two individual spheres as clusters.
Indeed, in dimensions $n \ge 3$, the minimum of the mean-squared error is achieved by a partition that separates the two spheres and has unit distance between the points in each cluster and the respective mean.
In dimension $n=2$, however, the minimizer fails to identify the individual spheres; an optimal partition is obtained by a separating hyperplane that
does not contain the point at which the spheres touch.
\end{abstract}
\maketitle

\section{Introduction}

In many applications of data science, large sets of vectors need to be grouped into a small
number of subsets whose elements are close to each other. This type of partitioning into subsets is also called clustering \cite{MacKay}. The subsets are often believed to be 
distinct constituents in a mixture of random vectors that are sampled from different distributions. 
In many cases, the distributions are from a known family that is parametrized by
the expected value of the outcomes, and the outcomes concentrate near the expected value \cite{POL,Das99}. Partitioning the observed set
of vectors into subsets yields the empirical means, also called centroids, which provide an estimate for the expected values. On the other
hand, once the expected values are accurately determined, one assumes that mapping each vector
to the subset whose centroid is closest provides a good partition.
This heuristic approach to the clustering problem is captured in an iterative algorithm by Lloyd \cite{LLOYD}, which 
aims to minimize an objective function that measures the Euclidean mean squared distance of the 
elements in each of the subsets from the respective centroid. Although the algorithm seems to work well in practice,
known results lack general a priori performance guarantees \cite{BUWI,KIE,Selim84,DuFG99,LuZ16}
or show cases with slow convergence \cite{Vattani11} even for two-dimensional clustering. 

Another setting in which one tries to minimize the mean-squared distance is in vector quantization \cite{BER,GEGR}, see also \cite{Steinhaus}. There, 
partitioning of the outcomes of a random vector is not explicitly motivated by an underlying assumption that
it is a mixture. The main goal is to approximate the random vector by a quantized one, with a finite or discrete set
of outcomes while minimizing the distortion, measured in the expected Euclidean squared norm of the
quantization error or in terms of more general norms \cite{GRLU}.  

In this paper, we investigate the problem of minimizing the objective function appearing in Lloyd's algorithm
for the special case of partitioning into two subsets. Optimality for the 2-means problem has already been considered in dimension $n=2$
for the concrete examples of the uniform distribution on the disk and on the square \cite{Roychowdhury}.
We consider the concrete example of random vectors governed by a probability measure $\rho$ that is
formed by taking the average of two probability measures that are uniform on two touching spheres of unit radius
in $n$-dimensional Euclidean space. The concrete question is then whether an optimizer to the mean-squared error
of 2-means clustering assigns, up to sets of measure zero, a partition that singles out each individual sphere.
Earlier
results indicate that a convex relaxation of Lloyd's clustering algorithm \cite{PengW07} is successful if the spheres are sufficiently separated \cite{iguchi2015tightness, IguchiMPV15,LLLSW}. 
Indeed, in dimension $n=1$,  the desired result is achieved if and only if  the spheres are separated
by a sufficiently large distance. A unit sphere in dimension $n=1$
is a set of two points at a distance of 2. The uniform probability measure on two symmetrically arranged spheres 
at a distance $2\epsilon$
is $\rho = (1/4) \delta_{-2-\epsilon} + (1/4)\delta_{-\epsilon} + (1/4)\delta_{\epsilon} + (1/4)\delta_{2+\epsilon}$, where $\delta_a$ is for any $a \in \mathbb R$ a Dirac measure with support $\{a\}$.
If we choose $0<\epsilon<(\sqrt 3 - 1)/2$, then by exhausting all choices of partitions,
it is seen that the set $S_1=\{-\epsilon, \epsilon, 2+\epsilon\}$ with mean $m_1=(2+\epsilon)/3$ and the set $S_2 = \{-2-\epsilon\}$ with mean $m_2=-2-\epsilon$
provide an optimal partition of $\{-2-\epsilon,-\epsilon,\epsilon,2+\epsilon\}$ for which the resulting mean-squared error
is $2(1+\epsilon+\epsilon^2)/3<1$, whereas the symmetric choice $R_1=\{\epsilon,2+\epsilon\}$ 
and $R_2=\{-\epsilon,-2-\epsilon\}$ gives a mean-squared error of $1$.
On the other hand, if $\epsilon>(\sqrt 3 - 1)/2$, then the partitioning into $R_1$ and $R_2$ is indeed optimal
for the mean-squared error.

It is tempting to attribute the failure to recover the individual spheres
to the discrete nature of the ``surface'' measures in $\mathbb R$. A closer look shows that the concentration
of the measure near the origin is the reason for the optimal partition formed by one sphere cannibalizing the other.  
As $n$ grows, the measure $\rho$ is less concentrated near the origin, and one expects
this cannibalizing behavior to disappear. 
Here, we examine the question whether a successful partition can be obtained 
in dimensions $n\ge 2$ {even if the spheres touch}. This is the most challenging case in which separation
can still be achieved theoretically. We consider the continuum limit, which means instead
of sampling the distributions with finitely many outcomes, we assume data given in the form of uniform measures on the spheres.


Our results show that
minimizing the mean-squared error in $\mathbb{R}^2$ leads to a non-symmetric partition, as in the case of dimension $n=1$. Fortunately, in dimensions $n \ge 3$
the minimizer recovers the partition into individual spheres, as one hopes to achieve.

This paper is organized as follows: In Section~\ref{sec:optpart}, we present the main results. The proofs are 
either elementary and included there or relegated to the Section~\ref{sec:proofs}. A first part of the proofs establishes
that optimal partitions for 2-means clustering are obtained from separating hyperplanes. The next part determines the location of the hyperplane.

{\bf Acknowledgment.} Both authors would like to thank Dustin Mixon for suggesting the 
intriguing calculus exercise worked out in Section~\ref{subsec:offset}.






\section{Optimal partitions for the mean-squared error}\label{sec:optpart}

The problem we are concerned with is the minimization of  the mean-squared error. 
Its value
depends on the partition of the support of a probability measure $\rho$ describing the outcomes of a mixture of 
random vectors. 

\begin{definition} \label{def:MSE}
Given a Borel probability measure $\rho$ on $\mathbb R^n$ with support $S$
and a Borel-measurable subset $S_1 \subset S$ with complement $S_2 = S \setminus S_1$,
then the {\it mean squared error} associated with the partition $\{S_1,S_2\}$ of $S$
is
$$
   \mathcal{E} (S_1) =  \min_{c_1\in \mathbb{R}^n} \int_{S_1} \|x-c_1\|^2 d \rho(x) +  \min_{c_2 \in \mathbb{R}^n}\int_{S_2} \|x-c_2\|^2 d \rho(x)
$$ 
\end{definition}

In this paper, we are concerned with a special case where
$\rho$ is the (normalized) surface measure for the union of two touching spheres,
$$
   \rho = \frac 1 2 (\sigma_{-1} + \sigma_1 )\, .
$$
Here $\sigma_a$ is the surface measure supported on $\mathbb{S}_a \equiv \{x \in \mathbb{R}^n: \|x-a e_1\|=1\}$, where $e_1$ is the first
canonical basis vector in $\mathbb{R}^n$. The measure $\sigma_a$ is obtained from translating $\sigma_0$,
so for any Borel measurable set $A$, $\sigma_a (A+ae_1)= \sigma_0(A)$,
 and for any orthogonal matrix $O$,
$\sigma_0(A)=\sigma_0(O^{-1}(A))$.

The following are the main theorems in this paper:

  \begin{theorem}\label{thm:optimalhyperplane}
  Let the Borel measure be given by $\rho=  \frac 1 2 (\sigma_{-1} + \sigma_1 )$ on $\mathbb R^n$ with support $S=\mathbb{S}_{-1} \cup \mathbb{S}_1$.
  Let $S_1, S_2$ form a partition of $S$ into two Borel measurable subsets, then
  there exist $a \in \mathbb R$ and  $T_1 = \{x \in \mathbb R^n: x_1 \le a\}$ such that
  $\mathcal{E}(T_1) \le \mathcal{E}(S_1)$. Moreover, if $S_1$ is minimal for the mean-squared error, then 
  there is a choice of the cutoff $a$ for which $T_1$ coincides with $S_1$ or $S_2$, up to a set 
   of zero probability.
  \end{theorem}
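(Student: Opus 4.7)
The assertion splits into two parts: (i) any partition $(S_1, S_2)$ is weakly improved by a half-space partition with \emph{some} normal $w \in \mathbb R^n$, and (ii) among half-space partitions, the minimum is attained by one whose normal is $\pm e_1$. The uniqueness clause then follows by tracing equality through the chain.

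For (i) I would apply the standard Lloyd-type quantizer-assignment argument. Let $c_1, c_2$ realize the two minima in Definition~\ref{def:MSE} for $(S_1,S_2)$, and set $T_1 = \{x : \|x-c_1\| \le \|x-c_2\|\}$, a half-space with normal $w := c_2 - c_1$. The pointwise bound $\min_j \|x - c_j\|^2 \le \|x - c_i\|^2$ for $x \in S_i$ together with the $L^2$-optimality of centroids gives
\[
\mathcal{E}(S_1) \;\ge\; \int_{T_1} \|x - c_1\|^2 \, d\rho + \int_{T_2} \|x - c_2\|^2 \, d\rho \;\ge\; \mathcal{E}(T_1),
\]
since replacing $c_i$ by the centroid of $T_i \cap S$ in the second line can only decrease each term.

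For (ii), the $O(n-1)$ symmetry of $\rho$ acting on the last $n-1$ coordinates lets me rotate the normal into $\operatorname{span}(e_1, e_2)$ without changing the MSE, so parametrize $v(\theta) = \cos\theta\, e_1 + \sin\theta\, e_2$ with hyperplane offset $b$. Write $\mu_i = \rho(T_i)$ and $m_i = \int_{T_i} x\, d\rho$. Using $\mu_1 + \mu_2 = 1$ and (by the $x_1 \leftrightarrow -x_1$ symmetry of $\rho$) $m_1 + m_2 = 0$, the MSE simplifies to $\operatorname{Var}(\rho) - |m_1|^2/(\mu_1 \mu_2)$. The traces of $\{x:(x,v(\theta))=b\}$ on $\mathbb S_{\pm 1}$ are spherical caps of signed heights $\alpha = b - \cos\theta$ and $\beta = b + \cos\theta$; expanding $m_1, \mu_1$ through the cap mass $p(t)$ and cap first-moment length $g(t)$ on $\mathbb S^{n-1}$ yields an explicit function $F(\alpha, \beta)$ to be minimized on $\{|\beta - \alpha| \le 2\}$. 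The case $\theta \in \{0, \pi\}$ corresponds exactly to the boundary $|\beta - \alpha| = 2$.

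The main obstacle is proving that the minimum of $F$ is attained on that boundary. My first attempt is to fix the midpoint $(\alpha+\beta)/2 = b$ and show that $F$ is monotonic in $|\beta - \alpha|$, which amounts to a one-variable calculus inequality in the cap integrals $p$ and $g$ (and is where the dimension dependence of the qualitative answer in the paper's results must enter). If that is not clean, I would instead fix the total mass $\mu_1 = (p(\alpha)+p(\beta))/2$ and maximize $|m_1|^2$ under the constraint $|\beta-\alpha|\le 2$, exploiting the extra symmetry between $\alpha$ and $\beta$. The uniqueness clause then follows by chasing equality through the chain in (i) together with boundary attainment in (ii): equality forces $\|x - c_1\| \le \|x - c_2\|$ for $\rho$-a.e.\ $x \in S_1$ and the reverse on $S_2$, so $S_1$ coincides with the optimal $\{x_1 \le a\}$ (or its complement) on $S$ up to a $\rho$-null set.
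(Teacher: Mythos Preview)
Your part (i) is correct and is exactly the Voronoi/Lloyd step the paper records in Lemma~3.2 and Theorem~3.4. The divergence, and the gap, is in part (ii).

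You propose to reduce to hyperplanes with normal $v(\theta)=\cos\theta\,e_1+\sin\theta\,e_2$ and then minimize the explicit cap-integral function $F(\alpha,\beta)$ over the strip $|\beta-\alpha|\le 2$, hoping the minimum lands on the boundary $|\beta-\alpha|=2$. You yourself flag this minimization as the ``main obstacle'' and offer only two tentative strategies. Neither is carried out, and there is a warning sign in your own remark that ``this is where the dimension dependence \ldots must enter'': Theorem~\ref{thm:optimalhyperplane} holds uniformly for every $n\ge 2$; the dimension affects only the optimal offset $a$ (Theorems~2.2--2.4), never the orientation of the optimal hyperplane. An argument for (ii) that genuinely depends on $n$ through the cap functions $p,g$ is therefore doing more work than the statement requires, and it is not clear your monotonicity-in-$|\beta-\alpha|$ claim is even true uniformly in the other parameter.

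The paper sidesteps this optimization entirely with a symmetrization/conditioning argument. Starting from any partition $(S_1,S_2)$ with centroids $m_1,m_2$, it marginalizes onto the first coordinate: for each slice $\{x_1=t\}$ it compares the (conditional) contributions $d_1(t),d_2(t)$ to the two squared-distance integrals and reassigns the \emph{entire} slice to whichever side is cheaper, producing a partition $T_1'$ that is measurable with respect to $x_1$ alone and satisfies $\mathcal{E}(T_1')\le\mathcal{E}(S_1)$. Because $T_1'$ is $O(n-1)$-invariant and $\rho$ is as well, the centroids of $T_1',T_2'$ lie on $\mathbb{R} e_1$; one more Voronoi step then yields a half-space with normal $e_1$. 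No cap integrals, no minimization over $\theta$, and no dimension dependence. For the uniqueness clause the paper then argues that equality through this chain forces the centroids of the original $S_1,S_2$ onto $\mathbb{R} e_1$, invokes Theorem~3.4 to get a separating hyperplane, rules out the degenerate orientation containing the axis via a short comparison lemma, and finishes.

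In short: your outline is not wrong in spirit, but the hard step you isolate is left open, and the paper's route shows it is avoidable. If you want to salvage your approach, you would need an $n$-independent proof that tilting the hyperplane away from $e_1$ never helps; the fiber-reassignment idea is precisely such a proof, and it is simpler than the explicit calculus you propose.
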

  
  In short, disregarding sets of zero probability, an optimal partition of $S$ is given by two
  sets separated by a hyperplane orthogonal to $e_1$, at an offset $a$ from the origin.
  The fact that an optimal partition comes from a separating hyperplane is well known \cite{DuFG99},
  which we supplement with a symmetrization argument.
  
  This result motivates abbreviating the mean-squared error for this special case,
  and studying its dependence on the cutoff,
  $$
E(n, \offset ) = \mathcal{E}(\{x \in S: x_1 \le -\offset \}) \, .
$$
By the reflection symmetry of $\rho$ with respect to the first coordinate, it is sufficient to consider  $E(n,a)$ for $a \ge 0$. With this simplification, we can study the case of dimension $n=2$ in elementary terms.

\begin{theorem} {\it In dimension $n=2$, the absolute minimum of $E(2,\offset)$ among $\offset \in [0,2)$ is attained at a non-zero cutoff $\offset$.}
\end{theorem}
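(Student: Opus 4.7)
The plan is to obtain a closed-form expression for $E(2,a)$ in a convenient angular parameter and then show directly that the value $E(2,0)=1$ is not minimal. First I set $\alpha=\arccos(1-a)\in[0,\pi)$ for $a\in[0,2)$. Parametrizing $\mathbb S_{-1}$ as $(-1+\cos\theta,\sin\theta)$, the cluster $C_1=\{x\in S:x_1\le -a\}$ becomes the arc $\theta\in[\alpha,2\pi-\alpha]$ of $\mathbb S_{-1}$, while $C_2=S\setminus C_1$ consists of all of $\mathbb S_1$ together with the complementary arc on $\mathbb S_{-1}$.

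Next I exploit the variance decomposition. Taking the centroids $c_i$ of $C_i$ as the minimizers in Definition~\ref{def:MSE}, the parallel axis identity gives
\begin{equation*}
E(2,a) \;=\; \int_S\|x\|^2\,d\rho \;-\; \rho(C_1)\|c_1\|^2 \;-\; \rho(C_2)\|c_2\|^2.
\end{equation*}
A direct computation shows $\int_S\|x\|^2\,d\rho = 2$ and, by reflection symmetry, each $c_i$ lies on the $e_1$-axis. Using the identity $\int_S x_1\,d\rho = 0$ (so $\rho(C_1)c_1 = -\rho(C_2)c_2$) together with
\begin{equation*}
\rho(C_1)=\frac{\pi-\alpha}{2\pi}, \qquad \int_{C_1} x_1 \, d\rho = \frac{-\pi+\alpha-\sin\alpha}{2\pi},
\end{equation*}
and substituting $\rho(C_i)\|c_i\|^2=(\rho(C_i)c_i)^2/\rho(C_i)$, a short algebraic manipulation collapses the expression to
\begin{equation*}
E(2,a) \;=\; 2 - \frac{(\pi-\alpha+\sin\alpha)^2}{\pi^2-\alpha^2}.
\end{equation*}

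At $\alpha=0$ this recovers $E(2,0)=1$. The inequality $E(2,a)<1$ then reduces algebraically to
\begin{equation*}
2(\pi-\alpha)(\sin\alpha-\alpha)+\sin^2\alpha \;>\; 0,
\end{equation*}
whose Taylor expansion at $\alpha=0$ reads $\alpha^2-\pi\alpha^3/3+O(\alpha^4)$; this is strictly positive for all sufficiently small $\alpha>0$. Hence there is $a_0\in(0,2)$ with $E(2,a_0)<1=E(2,0)$.

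Finally, to guarantee that the infimum on the half-open interval $[0,2)$ is actually attained, I use the closed form with $\beta=\pi-\alpha\to 0^+$ to see that $E(2,a)\to 2$ as $a\to 2^-$. Consequently there is a cutoff $a^*<2$ beyond which $E(2,a)>1$, so the infimum over $[0,2)$ coincides with the minimum on the compact subinterval $[0,a^*]$, which is attained by continuity and, by the previous step, not at $a=0$. The only real subtlety is this last existence step on the half-open interval; once the variance decomposition reduces the MSE to centroid computations, everything else is routine.
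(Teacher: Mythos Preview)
Your proof is correct and takes a genuinely different route from the paper's. The paper simply picks the single value $a=1-\tfrac{\sqrt3}{2}$, writes down the three arc-length integrals, and evaluates to obtain the explicit number $\frac{45\pi^2-30\pi-9}{35\pi^2}<0.987<1=E(2,0)$, which already rules out $a=0$ as a minimizer. You instead derive the closed form
\[
E(2,a)=2-\frac{(\pi-\alpha+\sin\alpha)^2}{\pi^2-\alpha^2},\qquad \alpha=\arccos(1-a),
\]
via the parallel-axis identity and the relation $\rho(C_1)c_1=-\rho(C_2)c_2$, and then use a Taylor expansion near $\alpha=0$ to conclude that $E$ dips below $1$ for all sufficiently small positive $a$. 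Your formula is in fact the $n=2$ specialization of the paper's later Lemma~\ref{lem:I}, so the two arguments are consistent; they just use the lemma at different stages.

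What your approach buys is strictly more information: you obtain the full one-parameter profile of the objective rather than a single value, and you explicitly address the attainment issue on the half-open interval by computing $\lim_{a\to 2^-}E(2,a)=2$, a point the paper's proof passes over in silence. The paper's approach buys brevity and concreteness: no variance decomposition or asymptotics are needed, only a single definite-integral computation. Both arguments are elementary; yours is slightly more systematic, the paper's slightly more self-contained.
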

\begin{proof}
Parametrizing the two circles by arc length gives by a direct computation for $a=1-\frac{\sqrt{3}}{2}$ the 
probabilities
$
   \rho(\{x \in \mathbb{R}^2: x_1 \le -1+\frac{\sqrt 3}{2} \}) = \frac{5}{12}
$ and
$ 
  \rho(\{x \in \mathbb{R}^2: x_1 > -1+\frac{\sqrt 3}{2} \} ) = \frac{7}{12} .
$
Choosing $c_1 = (\zeta_1,0)$ and $c_2 = (\zeta_2,0)$
with $\zeta_1 = -1-\frac{3}{5\pi}$ and $\zeta_2 = \frac{5}{7}+ \frac{3}{7\pi}$
gives for the mean-squared error
\begin{align*}
   E\bigl(2,1-\mbox{$\frac{\sqrt{3}}{2}$}\bigr) \le \, & \frac{1}{4\pi} \biggl( \int_{\pi/6}^{11\pi/6} ((-1+\cos t - \zeta_1)^2 + \sin^2 t ) dt \biggr. \\
   & \biggl. + \int_{-\pi/6}^{\pi/6} ((-1+\cos t - \zeta_2)^2 + \sin^2 t) dt
    + \int_0^{2\pi} ((\cos t + 1 - \zeta_2)^2 + \sin^2 t) dt \biggr) \\
= &\,  \frac{45\pi^2 -30\pi -9}{35\pi^2} <0.987 \, .
\end{align*}
This is less than $E(2,0)=1$, so the absolute minimum is not attained at $a=0$. 
 \end{proof}

When the means of the two subsets $\{x \in \mathbb{R}^2: x_1 \le -a \}$ and $ \{x \in \mathbb{R}^2: x_1 > -a \} $
then Theorem~\ref{thm:optimalhyperplane} reduces identifying the optimal mean-squared error
to finding the minimum of a parameter integral.

In dimension $n=3$, the mean-squared error can be computed explicitly.


\begin{theorem}\label{thm:uniqueminimumdim3}
 {\it In dimension $n=3$, the absolute minimum of $E(3,\offset)$ among $\offset \in [0,2)$ occurs at $\offset=0$.}
 \end{theorem}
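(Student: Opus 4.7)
The plan is to evaluate $E(3,\offset)$ in closed form. The key simplification specific to dimension $n=3$ is that $d\sigma_{\pm 1} = \frac{1}{4\pi}\sin\theta\,d\theta\,d\phi$ in spherical coordinates centered at $\mp e_1$, so any polynomial integrand in $x_1$ pulls back to a polynomial in $\cos\theta$ against $\sin\theta\,d\theta$, which integrates in closed form. The only structural fact I would use is the centroid identity
\[
\int_A \|x-c\|^2 d\rho \;=\; \int_A \|x\|^2 d\rho \,-\, \rho(A)\|\bar c_A\|^2 \,+\, \rho(A)\|c-\bar c_A\|^2,
\]
minimized in $c$ at the centroid $\bar c_A = \rho(A)^{-1}\int_A x\,d\rho$.

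First I would parametrize $\mathbb{S}_{-1}$ by $x = -e_1+(\cos\theta,\sin\theta\cos\phi,\sin\theta\sin\phi)$, so the cut $x_1\le -\offset$ becomes $\cos\theta\le 1-\offset$. A direct computation gives $\sigma_{-1}(\{x_1\le -\offset\}) = (2-\offset)/2$, hence for $L=\{x\in S: x_1\le -\offset\}$ and $R=S\setminus L$ one has $\rho(L)=(2-\offset)/4$ and $\rho(R)=(2+\offset)/4$. Axial symmetry forces the centroids to lie on $\mathbb{R}e_1$; evaluating the $x_1$-moment of the cap and dividing by $\rho(L)$ gives $\bar c_L = -\frac{2+\offset}{2}e_1$, and the mass balance $\rho(L)\bar c_L+\rho(R)\bar c_R = \int x\,d\rho = 0$ then yields $\bar c_R = \frac{2-\offset}{2}e_1$.

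Next, using $\|x\|^2 = 2-2\cos\theta$ on $\mathbb{S}_{-1}$, one finds $\int_{\mathbb{S}_{-1},\,x_1\le-\offset}\|x\|^2 d\sigma_{-1} = (4-\offset^2)/2$, and hence $\int_L\|x\|^2 d\rho = (4-\offset^2)/4$ and $\int_R\|x\|^2 d\rho = (\offset^2+4)/4$. Substituting into the centroid identity,
\[
\mathcal{E}_L = \frac{(2-\offset)^2(2+\offset)}{16},\qquad
\mathcal{E}_R = \frac{-\offset^3+6\offset^2+4\offset+8}{16}.
\]
When the two are added, the cubic terms cancel and, after collecting,
\[
E(3,\offset) \;=\; \mathcal{E}_L+\mathcal{E}_R \;=\; 1+\frac{\offset^2}{4},
\]
which is strictly increasing on $[0,2)$ with unique minimum $E(3,0)=1$ at $\offset=0$.

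No step is conceptually difficult; the argument is a bookkeeping exercise. The only point requiring care is the signed split of the $\mathbb{S}_{-1}$-integrals between $L$ and $R$ and the addition of the full contribution of $\mathbb{S}_1$ to $R$. What is striking, and what ultimately drives the theorem, is the complete cancellation of the odd powers of $\offset$ in the sum $\mathcal{E}_L+\mathcal{E}_R$; this cancellation is specific to $n=3$ (where $\sin\theta\,d\theta$ matches polynomials in $\cos\theta$ flatly) and explains why the same simple answer should not be expected in $n=2$.
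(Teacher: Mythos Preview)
Your proof is correct and follows essentially the same route as the paper: parametrize in spherical coordinates, compute the cap masses $\rho(L)=(2-\offset)/4$, $\rho(R)=(2+\offset)/4$, the centroids $-(2+\offset)/2$ and $(2-\offset)/2$ on the axis, and evaluate to obtain $E(3,\offset)=1+\offset^2/4$. The only difference is cosmetic: the paper integrates $\|x-c_i\|^2$ directly in three pieces, whereas you organize the same calculation via the variance identity $\int_A\|x-c\|^2\,d\rho=\int_A\|x\|^2\,d\rho-\rho(A)\|\bar c_A\|^2$ and recover $\bar c_R$ from the global moment balance; both lead to the identical closed form and the same conclusion.
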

\begin{proof} We parameterize the two spheres by spherical coordinates and normalize the measure by surface area. 
Based on Theorem~\ref{thm:optimalhyperplane},
an optimal partition is obtained with a separating hyperplane orthogonal to the symmetry axis $\mathbb{R} e_1$.
The associated probabilities are for $-2 \le a \le 2$:  
$
   \rho(\{x \in \mathbb{R}^2: x_1 \le -a \}) = \frac 1 2 - \frac{a}{4}
$ and
$ 
  \rho(\{x \in \mathbb{R}^2: x_1 > -a \} ) = \frac 1 2 + \frac{a}{4} .
$
As shown in Theorem~\ref{thm:sephyperplane} below,
the mean-squared error is obtained by choosing $c_1$ and $c_2$ to be the means of the two subsets,
 $c_1=(\zeta_1 ,0,0)$, $c_2=(\zeta_2 ,0,0)$ with $\zeta_1 = -1-\frac{1}{2} a$, $\zeta_2 =1-\frac{1}{2} a$. 
 This choice results in
\begin{align*}
   E(3,a) = \, &\frac{1}{8\pi} \biggl(  \int_{0}^{2\pi} \int_{\arccos(1-a)}^{\pi} ((-1+\cos u - \zeta_1)^2 + \sin^2 u )\sin u \, du dt \biggr. \\
   &+ \int_{0}^{2 \pi} \int_{0}^{\arccos(1-a)} ((-1+\cos u - \zeta_2)^2 + \sin^2 u )\sin u \, du dt \biggr. \\
   &+ \int_{0}^{2 \pi} \int_{0}^{\pi} ((1+\cos u - \zeta_2)^2 + \sin^2 u )\sin u \, du dt \biggr) \\
= &\,  \frac{1}{4} a^2 +1\, .
\end{align*}
Thus $E(3,a)$ achieves its absolute minimum at $a=0$.
\end{proof}

Even in the absence of explicit computations for $E(n,a)$ in case $n>3$, we obtain the same monotonicity property as for $n=3$.

\begin{theorem} \label{thm:uniqueminimum}
{\it The inequality $\frac{\partial}{\partial \offset} E (n,\offset )> 0$ holds for all $\offset \in (0,2)$ and $n>3$. Moreover, $E(n,\offset )$ attains a minimum at $\offset =0$, and this minimum is unique.}
\end{theorem}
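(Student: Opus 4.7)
The plan has three steps: (i) differentiate $E(n,a)$ in $a$ via the envelope theorem, (ii) reduce positivity of $\partial_a E$ to a concrete scalar inequality, and (iii) prove that inequality on $(0,2)$ for $n\ge 4$. Uniqueness of the minimum at $a=0$ then follows from strict monotonicity on $[0,2]$ together with the reflection symmetry $E(n,-a)=E(n,a)$ induced by $x\mapsto -x$.

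For step (i), write $E(n,a)=\int_{S_1(a)}\|x-c_1\|^2\,d\rho + \int_{S_2(a)}\|x-c_2\|^2\,d\rho$ with $S_1(a)=\{x_1\le -a\}$ and optimal centroids $c_i(a)=\zeta_i(a) e_1$ equal to the conditional means (by Theorem~\ref{thm:sephyperplane}). Optimality of each $\zeta_i$ annihilates the $d\zeta_i/da$ contributions via the envelope theorem, and the Leibniz rule applied to the moving hyperplane $\{x_1=-a\}$ produces a boundary integral over $\mathbb{S}_{-1}\cap\{x_1=-a\}$, on which $\|x-c_i\|^2=(a+\zeta_i)^2+a(2-a)$ is constant. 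This yields
\[
  \frac{\partial E}{\partial a}(n,a) = \nu(a)\,\bigl(\zeta_2(a)-\zeta_1(a)\bigr)\,\bigl(2a+\zeta_1(a)+\zeta_2(a)\bigr),
\]
where $\nu(a)=(c_n/2)(2a-a^2)^{(n-3)/2}>0$ is the $x_1$-marginal density of $\rho$ at $-a$, with $c_n=\Gamma(n/2)/(\sqrt{\pi}\,\Gamma((n-1)/2))$. Since $\nu>0$ on $(0,2)$ and $\zeta_2>\zeta_1$, the sign of $\partial_a E$ equals that of $\phi(a):=\zeta_1(a)+\zeta_2(a)+2a$.

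For step (ii), put $w(t)=c_n(t(2-t))^{(n-3)/2}$ on $[0,2]$ and $W(a)=\int_0^a w$, $T(a)=\int_0^a tw$. A direct computation yields $\zeta_1=-(1-T)/(1-W)$ and $\zeta_2=(1-T)/(1+W)$, hence $\phi(a)=2a-2W(1-T)/(1-W^2)$. Integration by parts (using $T=aW-\hat W$ with $\hat W(a):=\int_0^a W$) makes positivity of $\phi$ equivalent to
\[
  g(a) := a - W(a)\bigl(1+\hat W(a)\bigr) > 0 \quad\text{for all}\quad a\in(0,2).
\]
The boundary values are $g(0)=g(2)=0$, and for $n\ge 4$ one has $g'(0)=1-w(0)=1>0$ since $w(0)=0$; hence $g>0$ in a right neighborhood of $0$.

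Step (iii) is the main obstacle. My approach is by contradiction: let $a_\star\in(0,2)$ be the smallest zero of $g$; then $g'(a_\star)\le 0$, and eliminating $1+\hat W(a_\star)=a_\star/W(a_\star)$ via $g(a_\star)=0$ transforms this into $a_\star w(a_\star)/W(a_\star)+W(a_\star)^2\ge 1$. To refute this, I would exploit the convexity of $W$ on $[0,1]$ (valid since $w'>0$ there for $n>3$) and the reflection symmetry $w(2-t)=w(t)$, splitting the argument into $a_\star\in(0,1]$ and $a_\star\in(1,2)$. Convexity on $[0,1]$ gives the upper bound $\hat W(a)\le aW(a)/2$, which together with the sharp estimate $W(a)<a/2$ for $a\in(0,1)$ (strict since $w$ is strictly increasing there and equals the uniform density only for $n=3$) should refute the inequality in the first range; on $[1,2]$, $W$ is concave and the symmetry $W(a)+W(2-a)=1$ provides the complementary bounds. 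The behavior at the right endpoint, where $g(2)=g'(2)=0$ for $n\ge 4$, will need a second-order analysis to confirm $g$ remains strictly positive throughout. Once $\partial_a E>0$ on $(0,2)$ is established, strict monotonicity of $E(n,\cdot)$ on $[0,2]$ combined with the reflection symmetry gives the unique minimum at $a=0$.
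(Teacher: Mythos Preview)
Your reduction in steps (i) and (ii) is correct and cleaner than the paper's direct calculation; in your notation the bracketed factor $L(n,a)$ of the paper's Lemma~\ref{lem:II} equals $(1-T(a))\,g(a)$, and since $1-T>0$ on $(0,2)$, positivity of $\partial_a E$ is indeed equivalent to $g(a)>0$.

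The gap is in step (iii). You propose to assume a first zero $a_\star\in(0,2)$, pass to $g'(a_\star)\le 0$, and after eliminating $\hat W$ arrive at
\[
\frac{a_\star\, w(a_\star)}{W(a_\star)} + W(a_\star)^2 \ \ge\ 1,
\]
which you then intend to refute on $(0,1)$ using convexity of $W$ and $W<a/2$. But this inequality is \emph{always} satisfied there: for $n>3$ the density $w$ is strictly increasing on $(0,1)$, hence $W(a)=\int_0^a w < a\,w(a)$ and so $a\,w/W>1$ for every $a\in(0,1)$. The first-derivative condition carries no information in this range, and the contradiction cannot be obtained along the line you sketched.

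The same two estimates do work if applied directly to the equation $g(a_\star)=0$ rather than to $g'(a_\star)\le 0$: from $\hat W(a_\star)\le a_\star W(a_\star)/2$ one gets $a_\star\bigl(1-\tfrac12 W(a_\star)^2\bigr)\le W(a_\star)$, and combining with $W(a_\star)<a_\star/2$ forces $W(a_\star)^2>1$, impossible. For $a\in(1,2)$ your symmetry idea also goes through cleanly: with $b=2-a\in(0,1)$, the identities $W(a)=1-W(b)$ and $\hat W(a)=1-b+\hat W(b)$ give $g(a)=(2-b)W(b)-(1-W(b))\hat W(b)$, which is positive by $\hat W(b)\le bW(b)/2$ and the elementary bound $4-3b+bW(b)>0$. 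So the overall route is salvageable, but the argument as written does not close.

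The paper's execution is different: it retains the full cubic $L(n,a)$ in $M_n^-=1-W$ and bounds $M_n^-$ on $(0,1]$ by comparison with dimension three via $\partial_n M_n^->0$ (Lemma~\ref{lem:III} and its Corollary; this is exactly your estimate $W\le a/2$), while on $[1,2)$ it uses an integration-by-parts series (Lemma~\ref{lem:IV}) to obtain $M_n^-(a)\ge \tfrac{A_n}{n-1}(2a-a^2)^{(n-1)/2}$. Your factorization to $g$ and the symmetry reduction on $(1,2)$ are more economical than the paper's series lemma, provided the first-derivative misstep is repaired as above.
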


Theorems~\ref{thm:uniqueminimumdim3} and \ref{thm:uniqueminimum} give us that the $2$-means objective function $E$ of two touching $n$-spheres is increasing in the variable $\offset$ for the cutoff for $n\ge 3$ in the continuum limit. Thus, for dimensions $n\ge 3$, the optimal $2$-means cutoff has a value of zero, so both $n$-spheres are recovered successfully.

The remainder of the paper is dedicated to the outstanding proofs of Theorems~\ref{thm:optimalhyperplane} and~\ref{thm:uniqueminimum}. 

\section{Proofs of main results on optimal partitions} \label{sec:proofs}

The first part of this section establishes the proof that an optimal partition
is given by a separating hyperplane that is orthogonal to the symmetry axis.
The second part examines the offset of the optimal separating hyperplane.

\subsection{Minimizing the mean-squared error by partitions with a separating hyperplane}

First, we consider a general Borel measure $\rho$ with support $S$ in $\mathbb{R}^n$.
Given a partition $\{S_1,S_2\}$ of $S$,
and $\rho(S_i) > 0$, then we call $m(S_i) = \int_{S_i} x d\rho(x) / \rho(S_i)$ the {\it mean} associated with the set $S_i$.
If $S_i$ is clear from the context, we also abbreviate $m_i = m(S_i)$.

By a direct computation, we have for any $S_i$ with $\rho(S_i)>0$ and $c_i \in \mathbb{R}^n$
$$
  \int_{S_i }\|x-c_i\|^2 d\rho(x)  = \int_{S_i }\|x-m_i\|^2d\rho(x) + \|c_i - m_i\|^2 \rho(S_i) \, ,
$$
so the minimum is achieved if and only if $c_i = m_i$.

Moreover, given $c_1, c_2 \in \mathbb{R}^{n}$ , then among all the partitions, the partition into Voronoi regions
is optimal, as shown in Lemma~\ref{lem:voronoi} below.

\begin{definition}
Given $c_1, c_2 \in \mathbb R^{n}$, we define the {\it Voronoi partition} $\{T_1, T_2\}$ of a Borel set $S$ associated with the vectors $c_1$ and $c_2$  by the assignment
$$
   T_1 = \{x \in S: \|c_1 - x\| \le \|c_2 - x\| \} \, , T_2 = S \setminus T_1 \, .
$$   
\end{definition}

From this definition, we see that this Voronoi partition consists of a closed half-space and its complement,
with a separating hyperplane that is orthogonal to $c_1-c_2$ and contains the midpoint $(c_1+c_2)/2$.

Next, we note that given a partition into sets of non-zero probability, passing 
to the Voronoi partition associated with the means can only improve the mean-squared error.

\begin{lemma} \label{lem:voronoi}
Let $S_1,S_2$ be a partition of $S$ with $0<\rho(S_1)<1$ and associated means $m_1$ and $m_2$, then
the Voronoi partition associated with $m_1, m_2$ satisfies
$$  
   \mathcal{E} (T_1) \le \mathcal{E}(S_1) \, .
$$ 
\end{lemma}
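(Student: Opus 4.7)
The plan is to compare the two errors through a common intermediate quantity: the integral of the pointwise minimum of squared distances to $m_1$ and $m_2$. First, I invoke the observation stated just before the lemma, that $m_i$ minimizes $c \mapsto \int_{S_i} \|x-c\|^2\, d\rho(x)$, to rewrite
\[
\mathcal{E}(S_1) = \int_{S_1} \|x-m_1\|^2\, d\rho(x) + \int_{S_2} \|x-m_2\|^2\, d\rho(x).
\]
For the Voronoi partition $\{T_1,T_2\}$, the (generally suboptimal) test choice $c_1 = m_1$, $c_2 = m_2$ in the formula of Definition~\ref{def:MSE} yields the upper bound
\[
\mathcal{E}(T_1) \le \int_{T_1} \|x-m_1\|^2\, d\rho(x) + \int_{T_2} \|x-m_2\|^2\, d\rho(x).
\]

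Next I would use the defining property of the Voronoi partition: every $x \in T_1$ satisfies $\|x-m_1\|^2 \le \|x-m_2\|^2$, and every $x \in T_2$ satisfies the reverse strict inequality. Consequently the right-hand side of the previous display equals $\int_S \min(\|x-m_1\|^2,\|x-m_2\|^2)\, d\rho(x)$. The same pointwise inequality also shows that the two-term expression for $\mathcal{E}(S_1)$ is at least this pointwise-minimum integral, since on each $S_i$ one has $\|x-m_i\|^2 \ge \min_j \|x-m_j\|^2$. Chaining the two bounds produces $\mathcal{E}(T_1) \le \mathcal{E}(S_1)$.

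I do not foresee a genuine obstacle here, as the argument is essentially a one-line swap-and-bound and the reasoning is standard for Voronoi-type comparisons. The only technical items worth noting are Borel measurability of the Voronoi sets (immediate from continuity of $x \mapsto \|x-m_i\|^2$ in the ambient $\mathbb{R}^n$) and the degenerate case where one of $T_1, T_2$ has zero probability; in that situation the corresponding summand in Definition~\ref{def:MSE} vanishes for any choice of $c_i$, and the same chain of inequalities still applies verbatim.
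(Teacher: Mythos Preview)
Your proof is correct and follows essentially the same approach as the paper: both arguments hinge on the fact that the Voronoi partition realizes the pointwise minimum $\min_j\|x-m_j\|^2$, so that $\int_{T_1}\|x-m_1\|^2\,d\rho+\int_{T_2}\|x-m_2\|^2\,d\rho=\int_S\min_j\|x-m_j\|^2\,d\rho\le \mathcal{E}(S_1)$. Your version is slightly more explicit in separating the two inequalities and in noting measurability and the degenerate-mass case, but the underlying idea is identical.
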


\begin{proof}
For any measurable partition $S_1$ and $S_2$ and $i \in \{1, 2\}$, 
choosing any $x \in T_i$ gives
by the definition of the Voronoi partition
$\|x-m_i\| \le \min \{\|x-m_1\|,\|x-m_2\|\}$.  Thus, the partition
of $S$ into $T_1$ and $T_2$ gives a mean-squared error that is bounded
above by that associated with $S_1$ and $S_2$.
\end{proof}

In the following, we focus on properties of optimal partitions. These properties are already known even in a more general context of $k$-means \cite{DuFG99,GRLU}.
We have decided to include them here to keep the exposition self-contained.

\begin{lemma}
If $\{S_1, S_2\}$ is a minimizing partition for the mean-squared error, then 
$0<m(S_i)<1$ for $i \in \{1,2\}$ and
$m(S_1) \ne m(S_2)$.
\end{lemma}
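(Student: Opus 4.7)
My plan is to exhibit an explicit partition with small mean-squared error and use it as a benchmark against which to rule out both failure modes named in the lemma. Concretely, consider $S_1^\star = \mathbb{S}_{-1}$, $S_2^\star = \mathbb{S}_1$; the corresponding means are $\pm e_1$, and since every point of $\mathbb{S}_{\pm 1}$ lies at distance exactly $1$ from $\pm e_1$, each of the two integrals in Definition~\ref{def:MSE} evaluates to $\rho(\mathbb{S}_{\pm 1}) = 1/2$, giving $\mathcal{E}(S_1^\star) = 1$. I read the statement ``$0 < m(S_i) < 1$'' as a typo for ``$0 < \rho(S_i) < 1$'' (the means are vectors, and positivity of the masses is exactly what is needed to make the means well-defined in the next lemma), and I will prove that latter inequality.

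For the positivity of the masses, I argue by contradiction: suppose $\rho(S_1) \in \{0,1\}$, say $\rho(S_1)=0$ up to swapping indices. Then the first summand in $\mathcal{E}(S_1)$ vanishes automatically, and the second, after optimizing $c_2$, equals $\int_S \|x - m(S)\|^2 \, d\rho(x)$, where $m(S)$ is the overall mean of $\rho$. The reflection symmetry of $\rho$ about the hyperplane $\{x_1 = 0\}$ forces $m(S) = 0$, and the identity $\int_{\mathbb{S}_a} \|x\|^2 \, d\sigma_a(x) = 1 + a^2$, obtained by translating to $\sigma_0$ and using that the first moment of $\sigma_0$ vanishes, then yields $\int_S \|x\|^2 \, d\rho = 2$. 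Since $2 > 1 = \mathcal{E}(S_1^\star)$, no partition with $\rho(S_1) \in \{0,1\}$ can be a minimizer.

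For the distinctness of the means, I suppose $m(S_1) = m(S_2) =: m$. Taking the $\rho$-weighted sum, $m = \rho(S_1) m(S_1) + \rho(S_2) m(S_2) = \int_S x \, d\rho(x) = 0$. Applying the variance decomposition recorded at the start of this section with $c_i = m_i = 0$, I obtain $\mathcal{E}(S_1) = \int_S \|x\|^2 \, d\rho = 2 > 1$, again contradicting optimality.

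I do not anticipate any real obstacle here; the one point to handle carefully is that the ``trivial'' minimizer $c_2$ in the single-set case really is the global mean of $\rho$, which is immediate from the variance decomposition already stated in this section together with the reflection symmetry of $\rho$.
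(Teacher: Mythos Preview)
Your proof is correct. Your reading of the typo is confirmed by the paper's own proof, which opens with ``We know $0<\rho(S_1)<1$''.

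The approaches differ. The paper argues abstractly, without reference to the specific measure: if one set has full mass, refine it into two pieces with distinct means and the error strictly drops; if $m_1=m_2$, pick a proper subset $R_1\subset S_1$ with mean $r_1\neq m_1$, note $\int_{R_1}\|x-r_1\|^2\,d\rho<\int_{R_1}\|x-m_1\|^2\,d\rho$, and conclude $\mathcal{E}(R_1)<\mathcal{E}(S_1)$. This works for any $\rho$ whose support is not a single point, which is why the paper places the lemma before specializing to the touching-spheres measure. Your argument instead exploits the concrete $\rho$: the benchmark $\mathcal{E}(\mathbb{S}_{-1})=1$ together with the explicit total second moment $\int_S\|x\|^2\,d\rho=2$ kills both degenerate cases at once, since in each case the error collapses to the full variance $2$. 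What you gain is a quantitative gap ($2$ versus $1$) and a shorter, more transparent argument; what you lose is generality, since your computation is tied to the geometry of the two unit spheres. Either route is adequate for the paper's purposes.
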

\begin{proof}
Let $\{S_1, S_2\}$ be a minimizing partition. We know $0<\rho(S_1)<1$, otherwise $S_1$ or $S_2$ have 
unit measure and we can
refine $S_1$ or $S_2$ and improve the mean-squared error.

Moreover, assuming an optimal partition into two sets $S_1$ and $S_2$ of non-zero probability and equal means 
$m_1=m_2$, then any partition 
performs equally well, and we can choose a subset $R_1\subset S_1$ with $0<\rho(R_1) <1$
such that the associated mean $r_1\equiv m(R_1) \ne m_1$. By the characterization of the mean,
then $\int_{R_1} \|x-r_1\|^2 d\rho(x) < \int_{R_1} \|x-m_1\|^2 d\rho(x)$.
For the partition formed by $R_1$ and $R_2 = S \setminus R_1$, we then get that 
\begin{align*}
  \int_{R_1} \|x-r_1\|^2 d\rho(x) + \int_{R_2}\|x-m_1\|^2 d\rho(x) & < \int_{R_1} \|x-m_1\|^2 d\rho(x) + \int_{R_2} \|x-m_1\|^2 d\rho(x) \\
  & = \mathcal{E}(S_1) \, .
\end{align*}
Now inserting the mean of $R_2$ instead of $m_1$ in the second term on the left shows that
$$
   \mathcal{E} (R_1) < \mathcal{E}(S_1) \, .
$$
This contradicts optimality, so $m_1=m_2$ cannot hold for a minimizing partition.
\end{proof}

\begin{theorem}\label{thm:sephyperplane}
Let $\rho$ be a Borel measure on $\mathbb{R}^n$ with support $S$.
If the partition $\{S_1, S_2\}$ is a minimizer for the mean-squared error, then the sets $T_1$ and $T_2$ in the 
Voronoi partition associated with the means $\{m(S_i)\}_{i=1}^2$ coincide with $S_1$ and $S_2$ up to
changes involving subsets of the  separating hyperplane or sets whose probability vanishes.
\end{theorem}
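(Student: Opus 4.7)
The plan is to combine Lemma~\ref{lem:voronoi} with a pointwise analysis of the integrand to show equality of the partitions off the separating hyperplane. Let $\{S_1, S_2\}$ be an optimal partition, set $m_i = m(S_i)$, and let $\{T_1,T_2\}$ denote the Voronoi partition associated with $m_1, m_2$. By Lemma~\ref{lem:voronoi} together with optimality we have $\mathcal{E}(T_1)=\mathcal{E}(S_1)$, so every inequality used in the proof of Lemma~\ref{lem:voronoi} must in fact be an equality.

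The key step is to unwind what that equality encodes. Since $\mathcal{E}(T_1)$ is bounded above by the energy of the partition $\{T_1,T_2\}$ with centers $m_1, m_2$ (not necessarily the means of $T_1, T_2$), one obtains the chain
\begin{equation*}
\mathcal{E}(S_1) = \int_{S_1}\|x-m_1\|^2 d\rho(x) + \int_{S_2}\|x-m_2\|^2 d\rho(x) \ge \int_S \min\bigl(\|x-m_1\|^2,\|x-m_2\|^2\bigr) d\rho(x) \ge \mathcal{E}(T_1).
\end{equation*}
Both inequalities are forced to be equalities. Equality in the first is the crucial one: the integrand $\|x-m_i\|^2$ on $S_i$ pointwise dominates the minimum of the two squared distances, with strict inequality precisely on the set where $x \in S_i$ but $\|x-m_{3-i}\|<\|x-m_i\|$.

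Next I would introduce the separating hyperplane $H = \{x \in \mathbb{R}^n : \|x-m_1\|=\|x-m_2\|\}$, which is the bisector orthogonal to $m_1-m_2$ through $(m_1+m_2)/2$. Off $H$, the minimum $\min(\|x-m_1\|^2,\|x-m_2\|^2)$ is attained by a unique index, and that index is exactly $i$ on $T_i \setminus H$. The equality in the chain then forces, up to a $\rho$-null set, that $S_1 \setminus H \subseteq T_1 \setminus H$ and $S_2 \setminus H \subseteq T_2 \setminus H$. Since both families partition $S \setminus H$, this yields $S_i \cap (S \setminus H) = T_i \cap (S \setminus H)$ modulo $\rho$-null sets, which is precisely the conclusion: $S_i$ and $T_i$ differ only by a subset of $H$ or a $\rho$-null set.

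I do not expect a serious obstacle; the only delicate point is to be careful with the tie-breaking set $H$, noting that values of $\rho$ on $H$ do not affect the energy expressions because $\|x-m_1\|=\|x-m_2\|$ there, so the assignment of hyperplane points to either $S_1$ or $S_2$ is immaterial for $\mathcal{E}$. The symmetrization step (the first half of Theorem~\ref{thm:optimalhyperplane}), which ensures that one can further rotate the separating hyperplane to be orthogonal to $e_1$, is handled separately and is not needed for this statement.
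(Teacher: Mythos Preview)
Your proposal is correct and follows essentially the same route as the paper: both use optimality together with Lemma~\ref{lem:voronoi} to force $\mathcal{E}(T_1)=\mathcal{E}(S_1)$, then observe that the pointwise inequality between the assigned squared distance and the minimum is strict off the bisecting hyperplane $H$, so the mismatch between $S_i$ and $T_i$ must have $\rho$-measure zero outside $H$. Your chain of inequalities makes the intermediate quantity $\int_S \min(\|x-m_1\|^2,\|x-m_2\|^2)\,d\rho$ explicit, whereas the paper phrases the same comparison via the sets $R_i = T_i\cap S_{3-i}$, but the content is identical.
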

\begin{proof}
We know $0<\rho(S_1)<1$, so both sets $S_1$ and $S_2$ have means under $\rho$.

Passing to the Voronoi partition $\{T_1, T_2\}$ associated with
these means $\{m(S_i)\}_{i=1}^2$ gives
$$
  \mathcal{E} (T_1) = \mathcal{E} (S_1) \, .
$$
Using the inequality in the definition of the Voronoi partition,
we see that if $R_1 = T_1 \cap S_2$ is non-empty, then so is $R_2 = T_2 \cap S_1$, and 
 $ \|x-m_i\| \le \min \{\|x-m_1\|,\|x-m_2\|\}$ if $x \in R_i \subset T_i$, $i \in \{1, 2\}$.
Hence, denoting the hyperplane  $H= \{x \in \mathbb{R}^n: \|x-m_1\| = \|x-m_2\|\}$,
on $R_1\setminus H$ and $R_2 \setminus H$ strict inequality holds in the norm bounds,
and we see that by the monotonicity of integrals, the equality $\mathcal{E}(T_1) = \mathcal{E}(S_1)$ forces both sets 
to have probability zero, $\rho(R_1\setminus H)= \rho(R_2 \setminus H) = 0$.  
\end{proof}

  From now on, we specialize to $\rho=(\sigma_{-1}+\sigma_1)/2$.
  As a first result for this concrete choice of $\rho$, we show that the mean-squared error does not increase when passing to a suitable partition into half-spaces
  that are separated by a hyperplane orthogonal to $e_1$.
  
  To obtain this, we note that choosing a partition that separates into half-spaces with a separating hyperplane that
  contains the symmetry axis $\mathbb{R} e_1$ is not optimal. Without loss of generality, we orient this hyperplane
  so that it is orthogonal to $e_2$.
  
  \begin{lemma} Let $n \ge 2$, $\rho= \frac 1 2 (\sigma_{-1} + \sigma_1 )$ be the measure defined on $\mathbb{R}^n$
  with support $S$, $S_1 = S \cap \{x \in \mathbb{R}^n: x_2 \ge 0\}$ and
  $T_1 = S \cap \{x \in \mathbb{R}^n: x_1 \ge 0\}$, then
  $\mathcal{E}(S_1)>\mathcal{E}(T_1)$.
  \end{lemma}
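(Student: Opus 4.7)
The plan is to compute both $\mathcal{E}(T_1)$ and $\mathcal{E}(S_1)$ in closed form using the symmetries of $\rho$, and then compare. For $\mathcal{E}(T_1)$: up to $\rho$-null sets (at worst the point $\{0\}$), the partition $\{T_1,T_2\}$ coincides with $\{\mathbb{S}_1, \mathbb{S}_{-1}\}$. By the rotational symmetry of each $\sigma_{\pm 1}$ about its center, the means are $m(T_i) = \pm e_1$. Since $\|x \mp e_1\| = 1$ identically on $\mathbb{S}_{\pm 1}$, each half contributes $\tfrac12$ to the error, yielding $\mathcal{E}(T_1) = 1$.

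For $\mathcal{E}(S_1)$, I would first observe that the reflection $x_1 \mapsto -x_1$ (which exchanges $\sigma_{-1}$ and $\sigma_1$ but leaves $S_1$ fixed) together with the rotations of the last $n-2$ coordinates preserve both $\rho$ and $S_1$, so the mean must satisfy $m(S_1) = h e_2$ for some $h \ge 0$. I would then insert this optimal center and expand $\|x - he_2\|^2 = \|x\|^2 - 2h x_2 + h^2$ inside the integral. The translation substitution $x = y + a e_1$ with $y \in \mathbb{S}_0$ and $a \in \{-1,+1\}$ gives $\|x\|^2 = 2 + 2 a y_1$, and the $y_1$-term integrates to zero over the upper hemisphere by the reflection $y_1 \mapsto -y_1$; this yields $\int_{S_1}\|x\|^2\, d\rho = 1$. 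Together with $\int_{S_1} x_2\, d\rho = h\rho(S_1) = h/2$ and $\rho(S_1) = 1/2$, this gives $1 - h^2/2$ from $S_1$, and by the reflection $x_2 \mapsto -x_2$ an equal contribution from $S_2$, so $\mathcal{E}(S_1) = 2 - h^2$.

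The difference is then $\mathcal{E}(S_1) - \mathcal{E}(T_1) = 1 - h^2$, and it remains to show $h < 1$ strictly. Since $h$ is the $\rho$-weighted average of $x_2$ over $S_1$ where $x_2 \le 1$ everywhere, the bound $h \le 1$ is immediate; strict inequality follows because the extremal set $\{x \in S : x_2 = 1\}$ consists of at most two isolated points and thus has $\rho$-measure zero in every dimension $n \ge 2$. The main obstacle is essentially bookkeeping — the only step needing any care is this last strictness argument, which is what makes the symmetric partition genuinely worse rather than merely tied.
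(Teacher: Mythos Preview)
Your proof is correct and follows essentially the same route as the paper: both identify the means by symmetry, expand $\|x - he_2\|^2$, and reduce the comparison to the strict inequality $h<1$. The only cosmetic differences are that the paper relates $\int_S \|x\|^2\,d\rho$ to $\mathcal{E}(T_1)$ directly (obtaining $\mathcal{E}(S_1)=\mathcal{E}(T_1)+1-h^2$ without evaluating either side), whereas you compute $\mathcal{E}(T_1)=1$ and $\mathcal{E}(S_1)=2-h^2$ explicitly; and the paper justifies $h<1$ by noting the mean lies in the interior of the convex hull of $S_1$, while your measure-zero argument for the extremal set $\{x_2=1\}$ amounts to the same thing.
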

  \begin{proof}
  By symmetry, the mean of $S_1$ is $m(S_1) = \alpha e_2$. Also, we know that the mean is in the interior of the convex hull of $S_1$,
  so  $0< \alpha < 1$. Again using the symmetry between $S_1$ and $S_2$ as well as $\rho(S_1) = \rho(S_2)=1/2$, 
  $$
    \mathcal{E}(S_1) = 2 \int_{S_1} \|x-\alpha e_2\|^2 d\rho = 2 \int_{S_1} \|x\|^2 d\rho - \alpha^2  = \int_S \|x\|^2 d\rho - \alpha^2 \, .
  $$
  Next, comparing with the Voronoi partition corresponding to $\{\pm e_1\}$ and using symmetry properties, we have
  $$
    \mathcal{E}(S_1) = 2 (\int_{T_1} \|x-e_1\|^2 + 1/2) - \alpha^2 = \mathcal{E}(T_1) +1 - \alpha^2 \, .
  $$
  From $0<\alpha<1$, we then have $\mathcal{E}(S_1) >\mathcal{E}(T_1)$.
  \end{proof}
  
  We are now ready to prove Theorem~\ref{thm:optimalhyperplane}, which states that an optimal partition coincides,
  up to sets of measure zero, with one obtained from a separating hyperplane that is orthogonal to $\mathbb R e_1$.
  
  \begin{proof}[Proof of Theorem~\ref{thm:optimalhyperplane}]
  Given a partition of $S$ by $S_1$ and $S_2$ with means $m_i = m(S_i)$, $i \in \{1 ,2 \}$,
  we observe the following:
  
  The algebra of Borel sets of the form $A_1 \times \mathbb{R}^{n-1}$ with $A_1 \subset \mathbb R$,
  is a sub-algebra of the Borel algebra of $\mathbb{R}^n$. The functions
  that are measurable with respect to this algebra depend only on the first coordinate.
  By the Radon-Nikodym theorem, there exist functions $d_i: \mathbb{R} \to \mathbb{R}$
  such that for any $A=A_1 \times \mathbb{R}^{n-1}$,
  $$
    \int_A d_i(x_1) \chi_{S_i}(x) d\rho(x) = \int_A \|x-m_i\|^2 \chi_{S_i}(x) d\rho(x) \, .
  $$
  Next, using Fubini, if $\mu$ is the image measure of $\rho$ under projection onto
  the first coordinate, $\mu(A_1) = \rho(A_1 \times \mathbb{R}^{n-1})$, then
  there is $f: \mathbb R \to [0,1]$ such that
  $$
     \int_{A_1} d_1f d\mu  = \int_{A_1 \times   \mathbb{R}^{n-1}} \|x - m_1\|^2 \chi_{S_1}(x) d\rho(x) \, 
  $$
  and
  $$ \int_{A_1} d_2 (1-f) d\mu  = \int_{A_1   \times \mathbb{R}^{n-1}} \|x - m_2\|^2 (1-\chi_{S_1}(x)) d\rho(x) \, .
  $$
  Next, we observe if $f$ is the function associated with a partition $S_1$ and $S_2$ and $R_1 = \{x \in \mathbb R: d_1(x) \le d_2(x)\}$,
  then letting $g = \chi_{R_1}$ gives that
  $$
      \int_{\mathbb R} d_1 g d\mu + \int_{\mathbb R} d_2 (1-g)  d\mu
      \le   \int_{\mathbb R} d_1 f d\mu + \int_{\mathbb R} d_2 (1-f)  d\mu \, .
  $$
  We conclude,
  setting $T_1' = S \cap (R_1 \times \mathbb{R}^{n-1})$ and $T_2' = S \setminus T_1'$ that
  $$
     \int \|x - m_1\|^2 \chi_{T_1'} d\rho + \int \|x - m_2\|^2 \chi_{T_2'} d\rho
      \le \mathcal{E}(S_1) \, .
      $$
     Next, replacing $m_1$ and $m_2$ by the means $m_i' \equiv m(T_i')$, $i \in \{1,2\}$,  does not increase the left-hand side, which shows that
     $$
     \mathcal{E}(T_1') \le \mathcal{E}(S_1)\, .
  $$
  Finally, setting $\{T_1,T_2\}$ to be the Voronoi partition associated with the means $m_1'$ and $m_2'$
 implies
  $$
    \mathcal{E}(T_1) \le \mathcal{E}(S_1) \, .
  $$
  
  Moreover, if $S_1$ is chosen as a minimizer for the mean-squared error, then necessarily $m_i = m_i'$, $i \in \{1,2\}$,
  otherwise we would have strict inequality between $\mathcal{E}(T_1')$ and $\mathcal{E}(S_1)$. This implies 
  that the means $m_i$ are on the symmetry axis $\mathbb R e_1$. 
  Applying Theorem~\ref{thm:sephyperplane}
  now shows that, up to a set of probability zero, $S_1$ and $S_2$ are separated by a hyperplane.
  From the preceding lemma, optimality implies that the hyperplane does not contain the symmetry axis. 
    If it is not orthogonal to $e_1$, then there is a set $A_1 \subset \mathbb R$
such that $0<\rho(A_1 \times \mathbb{R}^{n-1} \cap S_1) < \rho( A_1 \times \mathbb{R}^{n-1} \cap S)/2 $
and hence there is a subset $B_1 \subset A_1$ with $\mu(B_1)>0$ for which $f(B_1) \subset (0,1/2)$.
This contradicts optimality, because changing from $f$ to the characteristic function $g$ would lower
the mean-squared error. We conclude that the hyperplane is orthogonal to $e_1$.
        \end{proof}

\subsection{The optimal offset of the separating hyperplane} \label{subsec:offset}

From here on, we consider the dependence of the mean-squared error on the
offset of the separating hyperplane.

We first introduce some additional notation. When the mean-square error is computed,
the measure $\rho$ can be replaced by an effective measure on $\mathbb R$ obtained from projecting
onto the first coordinate.
%
%
%

We first consider the projection of $\sigma_0$.
With the normalization constant
$$A_n:= \left( \int_{-1}^1 (1-x^2)^{\frac{n-3}{2}} dx \right)^{-1} =\frac{\Gamma (\frac{n}{2} )}{\sqrt{\pi} \Gamma (\frac{n-1}{2} )} ,$$
the resulting measure $\mu_n$ on Borel sets in $[-1,1]$ is given by \cite{MW}
$$d\mu_n (x) :=A_n (1-x^2)^{\frac{n-3}{2}} dx. $$

The probability that $\sigma_0$ 
assigns to $\{x \in \mathbb{S}_0: x_1 \le 1 - a\}$, $a \in [-1,1]$,
is equal to 
the probability of
$\{x \in \mathbb R: x \le 1 - \offset\}$ under $\mu_n$,
$$\Mnminus(\offset ) := \int_{-1}^{1-\offset} d\mu_n (x) .$$
This is the mass of part of the first sphere, obtained by a separating hyperplane between the two centers
of the touching spheres, at a distance of $1-a$ from the center of the first sphere.
From the normalization convention, the total mass of the measure
obtained from two spheres is two, so 
the complementary mass remaining is  
$$\Mnplus(\offset ) := 2-\Mnminus(\offset ) .$$
The mean of the first piece is
$$\Cnminus( \offset ):=\frac{\int_{-1}^{1-\offset} xd\mu_n (x)}{\Mnminus(\offset )} ,$$
and that of the second piece, relative to $C^-_n(0)=0$, accordingly
$$\Cnplus( \offset ):=\frac{2- \int_{-1}^{1-\offset} xd\mu_n (x)}{\Mnplus(\offset )} .$$
With the help of Fubini-Tonelli, the integration over $\mathbb{R}^n$ giving the mean-squared error 
can be reduced
to an integral with respect to $\mu_n$. 
The contributions to the mean-squared error are split into 3 terms,
$$E_- (n, \offset ):=\int_{-1}^{1-\offset} (1-x^2 +(x-\Cnminus( \offset ))^2) d\mu_n (x),$$
$$E_{\pm} (n, \offset ):=\int_{1-\offset}^1  (1-x^2 +(x-\Cnplus( \offset ))^2) d\mu_n (x),$$
 and
$$E_+ (n, \offset ):=\int_{-1}^1  (1-x^2 +(2+x-\Cnplus( \offset ))^2) d\mu_n (x) .$$
 In each of these cases the integrand is the squared distance of a point on either of the two
 spheres from the respective mean of the partition.
 The resulting mean-squared error is obtained by summing the three contributions and dividing by the total mass,
 $$E (n, \offset )=\frac{1}{2} \big[ E_- (n, \offset )+E_{\pm} (n, \offset ) +E_+ (n, \offset )\big] .$$

%
%
%

\begin{lemma}\label{lem:I} 
Let $n \ge 2$ and $a \in [0,2]$, then
$$E (n, \offset ) = 3-\frac{1}{2} \left( \frac{(\int_{-1}^{1-\offset}  xd\mu_n (x) )^2}{\int_{-1}^{1-\offset}  d\mu_n (x)} + \frac{(2-\int_{-1}^{1-\offset}  xd\mu_n (x) )^2}{2-\int_{-1}^{1-\offset}  d\mu_n (x)} \right) .$$
\end{lemma}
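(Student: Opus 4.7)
The plan is to reduce each of the three mean-squared error contributions to an elementary expression in the two moments
$$p = \int_{-1}^{1-a} d\mu_n(x) = M_n^-(a), \qquad q = \int_{-1}^{1-a} x\,d\mu_n(x),$$
and then combine them. The key simplification is that the integrand $1-x^2 + (x-c)^2$ has no $x^2$ term: it equals $1 - 2cx + c^2$, so the second moment $\int x^2\,d\mu_n$ never enters the final answer. Thus each piece reduces to a linear combination of the mass and the first moment of the corresponding part of the support, evaluated against the constant $c$.

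First I would use the normalization of $\mu_n$, namely $\int_{-1}^{1} d\mu_n = 1$, together with the reflection symmetry $d\mu_n(-x)=d\mu_n(x)$ which gives $\int_{-1}^{1} x\,d\mu_n = 0$ and hence $\int_{1-a}^{1} x\,d\mu_n = -q$. Substituting $c = C_n^-(a) = q/p$ into the integrand of $E_-$ and using $\int_{-1}^{1-a} d\mu_n = p$, I expect to find
$$E_-(n,a) = p - \tfrac{q^2}{p}.$$
Applying the same reduction to $E_\pm$ with $c = C_n^+(a) = (2-q)/(2-p)$ on the interval $[1-a,1]$ (of mass $1-p$ and first moment $-q$), I expect
$$E_\pm(n,a) = (1-p) + 2q\,C_n^+(a) + (1-p)\bigl(C_n^+(a)\bigr)^2.$$
For $E_+$, which uses the full first sphere shifted by $2$, the odd moment vanishes and the second-moment terms cancel as before, leaving
$$E_+(n,a) = 1 + (2 - C_n^+(a))^2.$$

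Next I would sum these three expressions. Expanding $(2-C_n^+)^2 = 4 - 4C_n^+ + (C_n^+)^2$ and collecting the $C_n^+$-terms yields
$$2q\,C_n^+ - 4 C_n^+ + (2-p)(C_n^+)^2 = -\frac{2(2-q)^2}{2-p} + \frac{(2-q)^2}{2-p} = -\frac{(2-q)^2}{2-p},$$
after substituting $C_n^+(a) = (2-q)/(2-p)$. All remaining constants consolidate to $6$, so
$$E_-(n,a) + E_\pm(n,a) + E_+(n,a) = 6 - \frac{q^2}{p} - \frac{(2-q)^2}{2-p},$$
and halving gives exactly the claimed formula.

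There is no serious obstacle here; the computation is routine provided one exploits the two identities $\int_{-1}^1 d\mu_n = 1$ and $\int_{-1}^1 x\,d\mu_n = 0$ to handle $E_+$ and to express the first moment on $[1-a,1]$ in terms of $q$. The only item demanding care is keeping track of the $+2$ shift in $E_+$ and the cross term $2q\,C_n^+$ in $E_\pm$, whose sign is fixed by the fact that $q<0$ for $a<1$ while the first moment on the complementary interval carries the opposite sign.
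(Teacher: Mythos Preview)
Your proposal is correct and follows essentially the same approach as the paper: both compute the three pieces $E_-$, $E_\pm$, $E_+$ using the normalization $\int_{-1}^1 d\mu_n=1$ and the oddness $\int_{-1}^1 x\,d\mu_n=0$, then sum and simplify via $C_n^+=(2-q)/(2-p)$. Your explicit remark that $1-x^2+(x-c)^2=1-2cx+c^2$ kills the second moment is a nice way to streamline the bookkeeping, but the underlying computation is identical to the paper's.
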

\begin{proof} From normalization, we have the  identities
$\int_{-1}^1  d\mu_n (x) =1 $  and $\int_{-1}^{1-\offset} d\mu_n (x) = 1-\int_{1-\offset}^1 d\mu_n (x)$;
from symmetry,
$\int_{-1}^1 xd\mu_n (x) =0 $  and
$\int_{-1}^{1-\offset}  xd\mu_n (x) = -\int_{1-\offset}^1 xd\mu_n (x)  $.
With the expression for $\Cnminus( \offset )$ and $\Mnminus(\offset )$, 
\begin{align*}
E_- (n, \offset )& = \Mnminus(\offset ) - 2\Cnminus( \offset ) \int_{-1}^{1-\offset}  xd\mu_n (x) + (\Cnminus( \offset ))^2 \Mnminus(\offset )\\
&=\Mnminus(\offset ) - \left( \Cnminus( \offset ) \right)^2 \Mnminus(\offset )
\end{align*}
The integrals in the other terms are expressed similarly, including $\Cnplus( \offset )$ and $\Mnplus(\offset)$ ,
\begin{align*}
E_{\pm} (n, \offset ) &= \int_{1-\offset}^{1} d\mu_n (x) - 2\Cnplus( \offset ) \int_{1-\offset}^{1} xd\mu_n (x) + (\Cnplus( \offset ))^2 \int_{1-\offset}^1 d\mu_n (x) \\
&=1-\Mnminus(\offset ) + 2\Cnplus(\offset )\Cnminus(a) \Mnminus(a) + (\Cnplus(\offset ))^2 (1- \Mnminus(\offset))\\
& = 1-\Mnminus(\offset ) +2\Cnplus(\offset )\Cnminus(a) \Mnminus(a)  + (\Cnplus(\offset ))^2 
(\Mnplus(\offset)-1) \, .
\end{align*}

Because the last term is integrated over the entire sphere, the normalization and symmetry yield
\begin{align*}
E_+ (n, \offset ) &= \int_{-1}^{1} d\mu_n (x) - 2(2-\Cnplus( \offset )) \int_{-1}^{1}  xd\mu_n (x) + (2-\Cnplus( \offset ))^2 \int_{-1}^{1}  d\mu_n (x)\\
&=1+ (2-\Cnplus( \offset ))^2\\
&=5-4\Cnplus( \offset ) +(\Cnplus( \offset ))^2 .
\end{align*}

%
Adding together $E_- (n,\offset ), E_{\pm} (n, \offset ),$ and $E_+ (n, \offset )$ and dividing by $2$ gives,
after collecting terms,
\begin{align*}
E(n, \offset )=&\frac{1}{2} \big[ \Mnminus(\offset ) - \left( \Cnminus( \offset ) \right)^2 \Mnminus(\offset ) \\
&+1-\Mnminus(\offset ) +2\Cnplus(\offset ) \Cnminus(a) \Mnminus(a) + (\Cnplus(\offset ))^2(\Mnplus(\offset)-1)\\
&+5-4\Cnplus( \offset ) +(\Cnplus( \offset ))^2\big] \\ 
&=\frac{1}{2} \big[ 6-\left( \Cnminus( \offset ) \right)^2 \Mnminus(\offset ) \\
&+2\Cnplus(\offset ) \Cnminus(a) \Mnminus(a) + (\Cnplus(\offset ))^2\Mnplus(\offset)\\
&-4\Cnplus( \offset ) \big]  \, .
\end{align*}
We simplify further by converting between $\Mnminus$ and $\Mnplus$,
\begin{align*}
E(n,\offset)
=&\frac{1}{2} \big[ 6-\left( \Cnminus( \offset ) \right)^2 \Mnminus(\offset ) \\
&+2\Cnplus(\offset ) \left(2-\Cnplus(a)\Mnplus(a)\right) + (\Cnplus(\offset ))^2 \Mnplus(\offset)\\
& -4\Cnplus( \offset )\big] \\
=&\frac{1}{2} \big[ 6-\left( \Cnminus( \offset ) \right)^2 \Mnminus(\offset ) \\
&-2(\Cnplus(\offset ))^2 \Mnplus(\offset ) + (\Cnplus(\offset ))^2\Mnplus(\offset )\big] \, .
\end{align*}


Thus,
$$E (n,\offset )=3-\frac{1}{2} \left( (C_n^-(\offset ))^2 \Mnminus(\offset ) + (\Cnplus(\offset ))^2 \Mnplus(\offset ) \right).$$
Recalling the values of $C_n^-$, $M_n^-$, $C_n^+$, and $M_n^+$, gives:
$$E (n, \offset ) = 3-\frac{1}{2} \left( \frac{(\int_{-1}^{1-\offset}  xd\mu_n (x) )^2}{\int_{-1}^{1-\offset}  d\mu_n (x)} + \frac{(2-\int_{-1}^{1-\offset}  xd\mu_n (x) )^2}{2-\int_{-1}^{1-\offset}  d\mu_n (x)} \right) .$$
\end{proof}

\begin{lemma} \label{lem:II} The derivative $\frac{\partial}{\partial \offset} E (n,\offset )$ is expressed in terms of $\Mnminus, \Mnplus$ and $a$ as 
\begin{align*}
\frac{\partial}{\partial \offset} E (n,\offset ) =& \frac{2A_n (2\offset -\offset^2 )^{\frac{n-3}{2}}}{(\Mnminus(\offset ) \Mnplus(\offset ))^2} \Big[ (1-\offset ) (\Mnminus(\offset ))^3 \\
&+ (2\offset -1)(\Mnminus( \offset ))^2 \\
&+\frac{A_n}{n-1} (2-\offset )(2\offset -\offset^2 )^{\frac{n-1}{2}} (\Mnminus(\offset ))^2 \\
&+\left( \frac{A_n}{n-1} \right)^2 (2\offset -\offset^2 )^{n-1} \Mnminus(\offset )\\
&+2\frac{A_n}{n-1} (\offset -1)(2\offset -\offset^2 )^{\frac{n-1}{2}} \Mnminus(\offset )\\
&-\left( \frac{A_n}{n-1} \right)^2 (2\offset -\offset^2 )^{n-1} \Big] .
\end{align*}
\end{lemma}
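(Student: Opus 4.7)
The plan is to differentiate the closed-form expression for $E(n,\offset)$ provided by Lemma~\ref{lem:I}, then reduce the result to the claimed shape by eliminating the integral $\int_{-1}^{1-\offset} x\, d\mu_n(x)$ in favor of an elementary function. Throughout, I abbreviate $M=\Mnminus(\offset)$ (so that $\Mnplus(\offset)=2-M$) and $N(\offset)=\int_{-1}^{1-\offset} x\, d\mu_n(x)$, so that Lemma~\ref{lem:I} reads
$$E(n,\offset)=3-\frac{1}{2}\left(\frac{N^2}{M}+\frac{(2-N)^2}{2-M}\right).$$

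The key auxiliary identity is a closed form for $N$. Since $\frac{d}{dx}\bigl[-\frac{A_n}{n-1}(1-x^2)^{(n-1)/2}\bigr]=A_n x(1-x^2)^{(n-3)/2}$, integration against $d\mu_n$ from $-1$ to $1-\offset$ gives (the lower endpoint contributing $0$)
$$N(\offset)=-\frac{A_n}{n-1}(2\offset-\offset^2)^{(n-1)/2}=:-B(\offset).$$
This is precisely what produces every factor of the form $\frac{A_n}{n-1}(2\offset-\offset^2)^{(n-1)/2}$ in the target formula. By the fundamental theorem of calculus, the derivatives have a common prefactor: $M'(\offset)=-A_n(2\offset-\offset^2)^{(n-3)/2}$ and $N'(\offset)=-(1-\offset)A_n(2\offset-\offset^2)^{(n-3)/2}$, which matches the prefactor $2A_n(2\offset-\offset^2)^{(n-3)/2}$ in the statement.

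From here I would apply the quotient rule to each of $\frac{N^2}{M}$ and $\frac{(2-N)^2}{2-M}$, substitute the expressions for $M'$ and $N'$, bring everything over the common denominator $(M(2-M))^2=(\Mnminus\Mnplus)^2$, and factor out $-\frac{1}{2}\cdot A_n(2\offset-\offset^2)^{(n-3)/2}$. After replacing $N$ by $-B$ this leaves a polynomial in $M$ and $B$ in the numerator. Expanding $(2-M)^2\bigl(B^2+2(1-\offset)BM\bigr)+M^2(2+B)\bigl(2(1-\offset)(2-M)-2-B\bigr)$, the cross-terms $\pm M^2B^2$ cancel and so do the cubic-in-$B,M$ terms $\pm 2(1-\offset)BM^3$. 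Collecting what remains by powers of $M$ and $B$ regroups to
$$(1-\offset)M^3+(2\offset-1)M^2+(2-\offset)BM^2+B^2M-2(1-\offset)BM-B^2,$$
up to the overall factor $-\frac{1}{4}$ that combines with the $-\frac12$ from $E$ to produce the $2$ appearing in the statement.

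The bookkeeping of this expansion — keeping track of the signs, the two separate cancellations, and the conversion between $M$ and $2-M$ — is the main obstacle; no genuinely new idea is required beyond the identification $N=-B$ and the common factor in $M'$, $N'$. To finish, I would translate each $B$ back into $\frac{A_n}{n-1}(2\offset-\offset^2)^{(n-1)/2}$ in the six-term bracket above, which reproduces term-by-term the six lines in the claimed formula for $\frac{\partial}{\partial\offset}E(n,\offset)$.
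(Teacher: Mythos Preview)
Your proposal is correct and follows essentially the same route as the paper: compute $\int_{-1}^{1-\offset} x\,d\mu_n(x)$ in closed form, differentiate the expression from Lemma~\ref{lem:I} via the quotient rule, put everything over the common denominator $(\Mnminus\Mnplus)^2$, and collect terms. One slip in the write-up: the overall factor you extract from the expansion is $-4$, not $-\tfrac14$; it is $(-\tfrac12)\cdot(-4)=2$ that produces the prefactor in the statement.
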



\begin{proof} Note that $\int_{-1}^{1-\offset} xd\mu_n (x) = -\frac{A_n}{n-1} (2\offset -\offset^2 )^{\frac{n-1}{2}}$ by direct integration.

Differentiating term by term yields
\begin{align*}
\frac{\partial}{\partial \offset} E (n,\offset ) =&  -\frac{1}{2 (\Mnminus(\offset ))^2} \Big[ 2\frac{A_n^2}{n-1} (1-\offset )(2\offset -\offset^2 )^{n-2} \Mnminus(\offset )\\
&+ \left( \frac{A_n}{n-1} \right)^2 (2\offset -\offset^2 )^{n-1} A_n (2\offset -\offset^2 )^{\frac{n-3}{2}} \Big] \\
&-\frac{1}{2(\Mnplus(\offset ))^2} \Big[ 2\left( 2+\frac{A_n}{n-1} (2\offset -\offset^2 )^{\frac{n-1}{2}} \right) A_n (1-\offset )(2\offset -\offset^2 )^{\frac{n-3}{2}} \Mnplus(\offset )\\
&-  \left( 2+\frac{A_n}{n-1} (2\offset -\offset^2 )^{\frac{n-1}{2}} \right)^2 A_n (2\offset -\offset^2 )^{\frac{n-3}{2}} \Big] \\ 
=& -\frac{A_n (2\offset -\offset^2 )^{\frac{n-3}{2}}}{2 (\Mnminus(\offset ))^2} \Big[ 2\frac{A_n}{n-1} (1-\offset )(2\offset -\offset^2 )^{\frac{n-1}{2}} \Mnminus(\offset )\\
&+ \left( \frac{A_n}{n-1} \right)^2 (2\offset -\offset^2 )^{n-1} \Big] \\
&-\frac{A_n (2\offset -\offset^2 )^{\frac{n-3}{2}}}{2(\Mnplus(\offset ))^2} \Big[ 2\left( 2+\frac{A_n}{n-1} (2\offset -\offset^2 )^{\frac{n-1}{2}} \right) (1-\offset ) \Mnplus(\offset )\\
&-  \left( 2+\frac{A_n}{n-1} (2\offset -\offset^2 )^{\frac{n-1}{2}} \right)^2  \Big] \, .
\end{align*}

Combining terms and simplifying gives
\begin{align*}
\frac{\partial}{\partial \offset} E (n,\offset ) =& \frac{2A_n (2\offset -\offset^2 )^{\frac{n-3}{2}}}{(\Mnminus(\offset )\Mnplus(\offset ))^2} \Big[ -2\frac{A_n}{n-1} (1-\offset )(2\offset -\offset^2 )^{\frac{n-1}{2}} \Mnminus(\offset ) \\
&-\left( \frac{A_n}{n-1} \right)^2 (2\offset -\offset^2 )^{n-1} \\
&+\left( \frac{A_n}{n-1} \right)^2 (2\offset -\offset^2 )^{n-1} \Mnminus(\offset )\\
&+(2\offset -1)(\Mnminus(\offset ))^2\\
&+(1-\offset )(\Mnminus(\offset ))^3 \\
&+\frac{A_n}{n-1} (2-\offset )(2\offset -\offset^2 )^{\frac{n-1}{2}} (\Mnminus(\offset ))^2 \Big] .
\end{align*}

Finally, rearranging terms gives the claimed expression for $\frac{\partial}{\partial \offset} E(n,\offset )$.

\end{proof}


%
%

To prove that $E (n,\offset)$ is increasing for all $\offset \in (0,2)$ and $n>3$, it suffices to show that $\partial E(n,\offset)/\partial \offset$ is positive for all $\offset \in (0,2)$ and $n>3$.
To this end, we use the simplified expression for $\frac{\partial}{\partial \offset} E (n,\offset )$ and 
apply an estimate for $M_n^-$.

\begin{lemma} \label{lem:III}
The expression $M_n^-(\offset)$ is continuous in both $n\in [3,\infty )$ and $\offset \in [0,2]$, and $\frac{\partial}{\partial n} \Mnminus(\offset ) >0$ for $n>3$ and $\offset \in (0,1)$ (and negative for $n>3$ and $\offset \in (1,2)$). 
\end{lemma}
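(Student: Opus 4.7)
The plan splits into two parts: joint continuity of $\Mnminus$, handled by a dominated-convergence argument, and the sign of $\partial_n\Mnminus$, obtained via a density-level computation that exploits the fact that $\mu_n$ is, for every $n\ge 3$, a probability measure symmetric about the origin.

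For continuity, $A_n=\Gamma(n/2)/(\sqrt{\pi}\,\Gamma((n-1)/2))$ is continuous on $[3,\infty)$, and writing
\[
\Mnminus(\offset)=A_n\int_{-1}^{1}\chi_{[-1,1-\offset]}(x)(1-x^2)^{(n-3)/2}\,dx,
\]
one sees that on any compact rectangle $[n_0,N]\times[0,2]\subset[3,\infty)\times[0,2]$ the integrand is dominated by the fixed $L^1$-function $(1-x^2)^{(n_0-3)/2}$. Pointwise convergence of the characteristic function off a single point then yields joint continuity.

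The heart of the argument is the sign of $\partial_n f_n$, where $f_n(x):=A_n(1-x^2)^{(n-3)/2}$ is the density of $\mu_n$. I would logarithmically differentiate,
\[
\partial_n f_n(x) = f_n(x)\Bigl(\tfrac{A_n'}{A_n}+\tfrac{1}{2}\ln(1-x^2)\Bigr),
\]
and use the normalization $\int f_n\,dx=1$ to compute, by differentiation under the integral, $A_n'/A_n=-\tfrac{1}{2}m_n$ with $m_n:=\int f_n(x)\ln(1-x^2)\,dx<0$. This produces the key identity
\[
\partial_n f_n(x)=\tfrac{1}{2}f_n(x)\bigl[\ln(1-x^2)-m_n\bigr].
\]
Since $\ln(1-x^2)$ is even and strictly decreases from $0$ to $-\infty$ on $[0,1)$, the bracket has a unique zero $x_0\in(0,1)$, and $\partial_n f_n$ is strictly positive on $(-x_0,x_0)$ and strictly negative on $[-1,1]\setminus[-x_0,x_0]$.

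To finish, I would set $H(b):=\int_0^b \partial_n f_n(x)\,dx$. The fact that $\int_0^1 f_n\,dx=1/2$ independently of $n$ gives $H(0)=H(1)=0$, while $H'=\partial_n f_n$ is positive on $(0,x_0)$ and negative on $(x_0,1)$, so $H>0$ on $(0,1)$. For $\offset\in[0,1]$, symmetry writes $\Mnminus(\offset)=\tfrac{1}{2}+\int_0^{1-\offset}f_n\,dx$, so $\partial_n\Mnminus(\offset)=H(1-\offset)>0$ for $\offset\in(0,1)$; for $\offset\in[1,2]$ the same symmetry gives $\Mnminus(\offset)=\tfrac{1}{2}-\int_0^{\offset-1}f_n\,dx$, so $\partial_n\Mnminus(\offset)=-H(\offset-1)<0$ for $\offset\in(1,2)$. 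The main obstacle is phrasing the derivative identity to avoid direct manipulation of the digamma function: the normalization trick $A_n'/A_n=-\tfrac{1}{2}m_n$ is what makes the two-humped sign structure of $\partial_n f_n$ transparent and, coupled with the antisymmetric role of $\offset$ about $\offset=1$, lets both cases of the lemma be handled by a single sign analysis.
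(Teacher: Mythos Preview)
Your proof is correct and follows essentially the same route as the paper. Both arguments hinge on the identity
\[
\partial_n f_n(x)=\tfrac{1}{2}f_n(x)\bigl[\ln(1-x^2)-m_n\bigr],\qquad m_n=\int_{-1}^{1}\ln(1-x^2)\,d\mu_n(x),
\]
and on the observation that the bracket vanishes exactly once on $(0,1)$; the paper reaches the same single-sign-change conclusion by computing $\partial_\offset\partial_n\Mnminus(\offset)$ and locating its zeros at $\offset=1\pm\sqrt{1-e^{m_n}}$, which is precisely your $x_0$ written in the $\offset$-variable via $1-x^2=2\offset-\offset^2$. Your reduction to $H(b)=\int_0^b\partial_n f_n$ with $H(0)=H(1)=0$ is just the paper's use of $\partial_n\Mnminus(0)=\partial_n\Mnminus(1)=0$, repackaged through the symmetry $\Mnminus(\offset)=\tfrac12\pm\int_0^{|1-\offset|}f_n$; this makes the two halves $\offset\in(0,1)$ and $\offset\in(1,2)$ fall out of a single estimate, which is a mild organizational gain but not a different idea.
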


\begin{proof}
 First, note that by Leibniz integral rule and integrability of $x^{\alpha} \ln x$, $\alpha >1$, at $0$,
\begin{align*}
\frac{\partial}{\partial n} \int_{-1}^{1-\offset} (1-x^2)^{\frac{n-3}{2}} dx
 &= \int_{-1}^{1-\offset} \frac{\partial}{\partial n} (1-x^2)^{\frac{n-3}{2}} dx
 = \int_{-1}^{1-\offset} \ln (1-x^2) (1-x^2)^{\frac{n-3}{2}} dx \, .
\end{align*}
Thus, taking the partial derivative with respect to $n$, we obtain
$$\frac{\partial}{\partial n} \Mnminus(\offset )=\int_{-1}^{1-\offset} \ln (1-x^2) d\mu_n (x) - \int_{-1}^{1-\offset} d\mu_n (x) \int_{-1}^1 \ln (1-x^2) d\mu_n (x) .$$
Consequently, we have $\frac{\partial}{\partial n} \Mnminus( 0)=\frac{\partial}{\partial n} \Mnminus( 1) = \frac{\partial}{\partial n} \Mnminus( 2) =0$. Next, we show that $\frac{\partial}{\partial n} \Mnminus(\offset ) >0$ for $\offset \in (0,1)$. To see this, we find critical points of $a \mapsto \frac{\partial}{\partial n} \Mnminus(\offset )$.\\

By
$$\frac{\partial}{\partial \offset} \frac{\partial}{\partial n} \Mnminus( \offset ) =\frac{(2\offset -\offset^2 )^{\frac{n-3}{2}}}{\int_{-1}^1 (1-x^2)^{\frac{n-3}{2}} dx} \int_{-1}^1 \left( \ln (1-x^2) - \ln (2\offset -\offset^2 ) \right) d\mu_n (x),$$
we have that $\frac{\partial}{\partial \offset} \frac{\partial}{\partial n} \Mnminus( \offset ) = 0$ if and only if
$$\offset \in \left\{ 0, 1\pm \sqrt{1-\exp \left( \int_{-1}^1 \ln (1-x^2) d\mu_n (x) \right)} ,2 \right\} .$$


Hence, for $\offset \in \left( 0,1-\sqrt{1-\exp \left( \int_{-1}^1 \ln (1-x^2) d\mu_n (x) \right)} \right)$, we have $\frac{\partial}{\partial n} \Mnminus( \offset )$ is increasing in $\offset$. Similarly, for $\offset \in \left( 1-\sqrt{1-\exp \left( \int_{-1}^1 \ln (1-x^2) d\mu_n (x) \right)} ,1 \right)$, we have $\frac{\partial}{\partial n} \Mnminus( \offset )$ is decreasing in $\offset$. Therefore, by $\frac{\partial}{\partial n} \Mnminus(0)=\frac{\partial}{\partial n} \Mnminus(1) =0$, we see $\frac{\partial}{\partial n} \Mnminus( \offset ) > 0$ for all $\offset \in (0,1)$. Repeating this for $\offset \in [1,2]$ gives $\frac{\partial}{\partial n} \Mnminus( \offset ) < 0$ for all $\offset \in (1,2)$. Thus we have shown $\Mnminus( \offset )$ is increasing in $n>3$ for $\offset \in (0,1)$ (and decreasing in $n>3$ for $\offset \in (1,2)$).
\end{proof}

\begin{corollary} \label{cor:MnM3}For $\offset \in [0,1]$, we then have the inequalities $M^- _3( \offset ) \le \Mnminus(\offset ) \le 1$ for all $n>3$.
\end{corollary}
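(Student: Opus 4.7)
The plan is to obtain both bounds as essentially immediate consequences of Lemma~\ref{lem:III}, with almost no additional work.

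The upper bound $M_n^-(a) \le 1$ is trivial: since $\mu_n$ is a probability measure on $[-1,1]$ with non-negative density $A_n(1-x^2)^{(n-3)/2}$, truncating the domain of integration to $[-1, 1-a]$ can only decrease the total mass, which is $1$. Equivalently, $M_n^-(a) = 1 - \int_{1-a}^{1} d\mu_n \le 1$.

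For the lower bound $M_3^-(a) \le M_n^-(a)$ on $[0,1]$, my plan is to integrate the positivity statement from Lemma~\ref{lem:III}. For $a \in (0,1)$, continuity of $n \mapsto M_n^-(a)$ on $[3,\infty)$ together with $\frac{\partial}{\partial n} M_n^-(a) > 0$ for $n > 3$ yields, by the fundamental theorem of calculus,
\[
M_n^-(a) - M_3^-(a) = \int_3^n \frac{\partial}{\partial m} M_m^-(a)\, dm \ge 0
\]
for every $n \ge 3$, with strict inequality when $n > 3$. At the boundary points, the values $M_n^-(0) = \int_{-1}^1 d\mu_n = 1$ and $M_n^-(1) = \int_{-1}^0 d\mu_n = 1/2$ (the latter by the even symmetry of the density $A_n(1-x^2)^{(n-3)/2}$) are independent of $n$, so equality $M_n^-(a) = M_3^-(a)$ holds for $a \in \{0,1\}$ and the inequality extends to the closed interval $[0,1]$.

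There is no real obstacle: the entire content of the corollary is already packaged into Lemma~\ref{lem:III}; the role of stating it as a separate corollary is presumably to record a uniform-in-$n$ two-sided bound on $M_n^-(a)$ in a clean form, ready to be inserted into the complicated derivative expression of Lemma~\ref{lem:II} when proving Theorem~\ref{thm:uniqueminimum}.
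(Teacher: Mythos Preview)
Your proposal is correct and matches the paper's implicit approach: the corollary is stated without proof immediately after Lemma~\ref{lem:III}, and its content is precisely the monotonicity in $n$ established there (together with the trivial bound by the total mass). One cosmetic remark: invoking the fundamental theorem of calculus is slightly more than you need---continuity of $n\mapsto M_n^-(a)$ on $[3,\infty)$ and strict positivity of the $n$-derivative on $(3,\infty)$ already force strict monotonicity, hence $M_n^-(a)>M_3^-(a)$ for $n>3$, without worrying about integrability of the derivative.
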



\begin{lemma} \label{lem:IV}
For all $n \ge 3$ and for all $\offset \in [1,2)$, $\Mnminus( \offset ) \ge \frac{A_n}{n-1} \left( 2\offset - \offset^2 \right)^{\frac{n-1}{2}}$.
\end{lemma}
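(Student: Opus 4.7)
The plan is to recognize that the right-hand side $\frac{A_n}{n-1}(2a-a^2)^{(n-1)/2}$ is, up to sign, the first moment of $\mu_n$ over the interval of interest. This identification was already worked out at the beginning of the proof of Lemma~\ref{lem:II}, where by direct integration (via $u = 1-x^2$) it is shown that
\begin{equation*}
  \int_{-1}^{1-a} x \, d\mu_n(x) = -\frac{A_n}{n-1}\bigl(2a - a^2\bigr)^{(n-1)/2}.
\end{equation*}
Thus the claimed lower bound is equivalent to
\begin{equation*}
  \int_{-1}^{1-a} d\mu_n(x) \;\ge\; \int_{-1}^{1-a} (-x)\, d\mu_n(x).
\end{equation*}

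Once the inequality is cast in this form, the key observation is the restriction $a \in [1,2)$: it forces the upper limit $1-a$ to lie in $(-1, 0]$, so the entire integration range $[-1, 1-a]$ is contained in $[-1,0]$. On this range, $x \le 0$ and $|x| \le 1$, which gives the pointwise estimate $-x \le 1$. Integrating this pointwise inequality against the nonnegative measure $d\mu_n$ immediately produces the desired comparison.

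In summary, my intended proof is the two-line argument: substitute the closed-form expression for $\int_{-1}^{1-a} x\, d\mu_n(x)$ to rewrite the claim as $\int_{-1}^{1-a}(1+x)\, d\mu_n(x) \ge 0$, and observe that $1 + x \ge 0$ on $[-1, 1-a] \subset [-1, 0]$. There is no real obstacle here; the only substantive ingredient is the explicit antiderivative computation, which has already been recorded earlier in the paper. The hypothesis $a \ge 1$ is used precisely to ensure the sign condition on the integrand, and the bound becomes an equality at $a = 1$ (when $1-a = 0$) only in the limit of point-mass-like weight.
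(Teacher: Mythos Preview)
Your proof is correct and considerably shorter than the paper's. The paper proceeds by the change of variables $y=1+x$, then repeated integration by parts on $(2-y)^{(n-3)/2}y^{(n-3)/2}$, producing an alternating series for $M_n^-(a)$; truncating at the first term yields the sharper bound $M_n^-(a)\ge \frac{2}{a}\cdot\frac{A_n}{n-1}(2a-a^2)^{(n-1)/2}$, which is then relaxed to the stated inequality by replacing $2/a$ with $1$ on $[1,2)$. Your route bypasses all of this: identifying the right-hand side with $-\int_{-1}^{1-a}x\,d\mu_n(x)$ reduces the claim to $\int_{-1}^{1-a}(1+x)\,d\mu_n(x)\ge 0$, which is immediate from $1+x\ge 0$ on $[-1,1]$ and $d\mu_n\ge 0$. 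The paper's intermediate bound is stronger by the factor $2/a$, but that extra strength is never used downstream (Case~II of Theorem~\ref{thm:uniqueminimum} invokes only the stated estimate), so your simplification costs nothing.

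Two minor remarks. First, the restriction $a\in[1,2)$ is actually irrelevant to your argument: $1+x\ge 0$ holds on all of $[-1,1]$, so your proof goes through for every $a\in[0,2)$. Second, your closing comment about equality at $a=1$ is not right---the integrand $(1+x)(1-x^2)^{(n-3)/2}$ is strictly positive on $(-1,1-a)$ for $n\ge 3$, so the inequality is in fact strict throughout $[1,2)$; but this does not affect the proof itself.
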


\begin{proof}

We make the change of variables $y=1+x$ with $dy=dx$, in $\Mnminus( \offset )=\int_{-1}^{1-\offset} A_n (1-x^2)^{\frac{n-3}{2}} dx$ to obtain $\Mnminus( \offset ) = \int_0^{2-\offset} A_n (2y-y^2)^{\frac{n-3}{2}} dy$. Repeating integration by parts on parts $(2-y)^{\frac{n-3}{2}}$ and $y^{\frac{n-3}{2}} dy$ yields the formula
\begin{small}
$$\int_0^{2-\offset} A_n \left( 2-y\right)^{\frac{n-3}{2}} y^{\frac{n-3}{2}} dy = 2\frac{A_n}{n-1} \sum_{k=0}^{\infty} \left( \prod_{j=0}^k \frac{n-2j-1}{n+2j-1} \right) (2\offset -\offset^2 )^{\frac{n-(2k+3)}{2}} (2-\offset )^{2k+1} .$$
\end{small}


By
\begin{align*}
\Big| \left( \prod_{j=0}^K \frac{n-2j-1}{n+2j-1} \right) \Big| &= \Big| (-1)^K \left( \prod_{j=0}^K \left( 1- \frac{n-1}{j+\frac{1}{2} (n-1)} \right) \right) \Big| \\
&\le \Big| \left( \prod_{j=0}^K \exp \left( -\frac{n-1}{j+\frac{1}{2} (n-1)} \right) \right) \Big| \\
&=\Big| \exp \left( -\sum_{j=0}^K \frac{n-1}{j+\frac{1}{2} (n-1)} \right) \Big|\\
&\rightarrow 0
\end{align*}
 as $K\rightarrow \infty$, we see the alternating series converges. Moreover, since the first term is always positive, the sum converges to a function always greater than zero for $\offset \in (1,2)$ (by property of alternating series). Lastly, we see that for each odd $n\ge 3$, there are exactly $\frac{n-1}{2}$ positive terms and for even $n\ge 4$, there are $\frac{n-2}{2}$ positive terms prior to a convergent alternating series (which starts at a positive term).\\
Consequently,
\begin{align*}
\Mnminus( \offset ) &\ge 2\frac{A_n}{n-1} (2\offset - \offset^2 )^{\frac{n-3}{2}} (2-\offset )\\
&=\frac{2}{\offset} \frac{A_n}{n-1} (2\offset -\offset^2 )^{\frac{n-1}{2}} ,
\end{align*}
which is greater than or equal to $\frac{A_n}{n-1} (2\offset -\offset^2 )^{\frac{n-1}{2}}$ (by maximizing the denominator for $\offset \in [1,2)$). 
\end{proof}


Lemma~\ref{lem:IV} gives estimates to show the main inequality of $\frac{\partial}{\partial \offset} E(n, \offset ) >0$ for $\offset \in [1,2)$.\\

Next, we establish  Theorem~\ref{thm:uniqueminimum} using Lemmas~\ref{lem:I} to \ref{lem:IV}.


\begin{proof}[Proof of Theorem~\ref{thm:uniqueminimum}]
We recall the simplified expressions
$$E (n, \offset ) = 3-\frac{1}{2} \left( \frac{(\int_{-1}^{1-\offset}  xd\mu_n (x) )^2}{\int_{-1}^{1-\offset}  d\mu_n (x)} + \frac{(2-\int_{-1}^{1-\offset}  xd\mu_n (x) )^2}{2-\int_{-1}^{1-\offset}  d\mu_n (x)} \right) ,$$
and
\begin{align*}
\frac{\partial}{\partial \offset} E (n,\offset ) =& \frac{2A_n (2\offset -\offset^2 )^{\frac{n-3}{2}}}{(\Mnminus(\offset ) \Mnplus(\offset ))^2} \Big[ (1-\offset ) (\Mnminus(\offset ))^3 \\
&+ (2\offset -1)(\Mnminus( \offset ))^2 \\
&+\frac{A_n}{n-1} (2-\offset )(2\offset -\offset^2 )^{\frac{n-1}{2}} (\Mnminus(\offset ))^2 \\
&+\left( \frac{A_n}{n-1} \right)^2 (2\offset -\offset^2 )^{n-1} \Mnminus(\offset )\\
&+2\frac{A_n}{n-1} (\offset -1)(2\offset -\offset^2 )^{\frac{n-1}{2}} \Mnminus(\offset )\\
&-\left( \frac{A_n}{n-1} \right)^2 (2\offset -\offset^2 )^{n-1} \Big].
\end{align*}


To show the desired inequality, we need only show that the factor in brackets is positive:
\begin{align*}
L(n, \offset ) =& \left( (1-\offset )\Mnminus( \offset ) +2\offset -1\right) \left( \Mnminus( \offset ) \right)^2 \\
&+\left( \frac{A_n}{n-1} (2\offset -\offset^2 )^{\frac{n-1}{2}} \Mnminus( \offset ) - 2\frac{A_n}{n-1} (1-\offset ) (2\offset -\offset^2 )^{\frac{n-1}{2}} \right) \Mnminus( \offset )\\
&+\left( \frac{A_n}{n-1} \right)^2 (2\offset - \offset^2 )^{n-1} \Mnminus( \offset ) - \left( \frac{A_n}{n-1} \right)^2 (2\offset -\offset^2 )^{n-1} > 0
\end{align*}
for $\offset \in (0,2)$ and $n>3$.\pagebreak[3]

We distinguish two cases, depending on the value of $\offset$.\\

{\it Case I:} If $\offset \in (0,1)$, by Corollary~\ref{cor:MnM3}, we replace $\Mnminus( \offset )$ with $M_3^- (\offset )=\frac{1}{2} (2-\offset ) $ for all positive terms. That is,
\begin{align*}
L(n,\offset ) \ge 
& \left( (1-\offset )\Mnminus( \offset ) +2\offset -1\right) \left( \Mnminus( \offset ) \right)^2\\
&+ \frac{A_n}{n-1} (2\offset -\offset^2 )^{\frac{n-1}{2}} \left( \frac{1}{2} (2-\offset ) \right)^2 \\
&- 2\frac{A_n}{n-1} (1-\offset ) (2\offset -\offset^2 )^{\frac{n-1}{2}}  \Mnminus( \offset )\\
&+\left( \frac{A_n}{n-1} \right)^2 (2\offset - \offset^2 )^{n-1} \left( \frac{1}{2} (2-\offset ) \right) - \left( \frac{A_n}{n-1} \right)^2 (2\offset -\offset^2 )^{n-1} .
\end{align*}

Moreover, we see $  (1-\offset )\Mnminus( \offset ) +2\offset -1 \ge 
(1-a)M_3^-(a)+2 \offset -1  = a/2 +a^2/2 \ge 0$.


Hence, the first term can be estimated as well by eliminating $M_n^-(a)$, resulting in the lower bound
\begin{align*}
L(n,\offset ) \ge 
&\left( \frac a 2 + \frac{a^2}{2}\right) \left( \frac{1}{2} (2-\offset )\right)^2 \\
&+\frac{A_n}{n-1} (2\offset -\offset^2 )^{\frac{n-1}{2}} \left( \frac{1}{2} (2-\offset ) \right)^2 \\
&- 2\frac{A_n}{n-1} (1-\offset ) (2\offset -\offset^2 )^{\frac{n-1}{2}}  \Mnminus( \offset )\\
&+\left( \frac{A_n}{n-1} \right)^2 (2\offset - \offset^2 )^{n-1} \left( \frac{1}{2} (2-\offset ) \right) \\
&- \left( \frac{A_n}{n-1} \right)^2 (2\offset -\offset^2 )^{n-1} .
\end{align*}

By Lemma~\ref{lem:III}, we also have that $\Mnminus( \offset ) \le \Mnminus(0)=1$.\\
Using this estimate for the remaining negative factor multiplying $\Mnminus$ gives a further lower bound
from which all quantities other than $a$ have been eliminated,
\begin{align*}
L(n,\offset ) \ge 
&\frac{1}{8} (1+\offset )\offset (2-\offset )^2 \\
&+\frac{A_n}{n-1} (2\offset -\offset^2 )^{\frac{n-1}{2}} \left( \frac{1}{2} (2-\offset ) \right)^2 \\
&- 2\frac{A_n}{n-1} (1-\offset ) (2\offset -\offset^2 )^{\frac{n-1}{2}} \\
&+\left( \frac{A_n}{n-1} \right)^2 (2\offset - \offset^2 )^{n-1} \left( \frac{1}{2} (2-\offset ) \right) \\ 
&- \left( \frac{A_n}{n-1} \right)^2 (2\offset -\offset^2 )^{n-1} \\ 
=&\frac{1}{8} (1+\offset )\offset (2-\offset )^2 \\
&+\frac{1}{8} \frac{A_n}{n-1} (2\offset^4 - 8\offset^3 +24\offset^2 -16\offset ) (2\offset - \offset^2 )^{\frac{n-1}{2}} \\
&-\frac{1}{2} \left( \frac{A_n}{n-1} \right)^2 \offset (2\offset -\offset^2 )^{n-1} .
\end{align*}


\noindent Finally, by the second and third term decreasing in $\offset \in (0,1)$, we have
\begin{align*}
L(n,\offset ) &\ge \frac{1}{8} (1+\offset )\offset (2-\offset )^2 + \frac{1}{4} \frac{A_n}{n-1} -\frac{1}{2} \left( \frac{A_n}{n-1} \right)^2 \\
&= \frac{1}{8} \left( (1+\offset )\offset (2-\offset )^2 +\frac{2A_n}{n-1} - \left( \frac{2A_n}{n-1} \right)^2 \right) \\
&\ge \frac{1}{8}  (1+\offset )\offset (2-\offset )^2 
> 0 .
\end{align*}

Consequently for $\offset \in (0,1]$, $\frac{\partial}{\partial \offset} E (n, \offset ) > 0$.

\medskip
\noindent {\it Case II:} If $\offset \in [1,2)$, we re-examine $L(n,\offset )$ and apply Lemma~\ref{lem:IV}.\\

By $\frac{A_n}{n-1} (2\offset -\offset^2 )^{\frac{n-1}{2}} \ge \left( \frac{A_n}{n-1} \right)^2 (2\offset -\offset^2 )^{n-1}$, we have
\begin{align*}
L(n,\offset ) =& \left( (1-\offset )\Mnminus( \offset ) +2\offset -1\right) \left( \Mnminus(\offset ) \right)^2 \\
&+\left( \frac{A_n}{n-1} (2\offset -\offset^2 )^{\frac{n-1}{2}} \Mnminus(\offset ) - 2\frac{A_n}{n-1} (1-\offset ) (2\offset -\offset^2 )^{\frac{n-1}{2}} \right) \Mnminus( \offset )\\
&+\left( \frac{A_n}{n-1} \right)^2 (2\offset - \offset^2 )^{n-1} \Mnminus(\offset ) - \left( \frac{A_n}{n-1} \right)^2 (2\offset -\offset^2 )^{n-1} \\ 
\ge& \left( (1-\offset )\Mnminus( \offset ) +2\offset -1\right) \left( \Mnminus(\offset ) \right)^2 \\
&+\left( \frac{A_n}{n-1} \right)^2 (2-\offset )(2\offset -\offset^2 )^{n-1} \left( \Mnminus(\offset )\right)^2 +\left( \frac{A_n}{n-1} \right)^2 (2\offset -\offset^2 )^{n-1} \Mnminus( \offset )\\
&+2\left( \frac{A_n}{n-1} \right)^2 (\offset -1)(2\offset -\offset^2 )^{n-1} \Mnminus( \offset ) -\left( \frac{A_n}{n-1} \right)^2 (2\offset -\offset^2 )^{n-1} \\ 
=&\left( (1-\offset )\Mnminus( \offset ) +2\offset -1\right) \left( \Mnminus(\offset ) \right)^2\\
&+\left( \frac{A_n}{n-1} \right)^2  (2\offset )\left[ \Mnminus( \offset )-\frac{1}{2} \left( \Mnminus( \offset )\right)^2  \right] (2\offset -\offset^2 )^{n-1} \\
&+\left( \frac{A_n}{n-1} \right)^2 \left[ 2\left( \Mnminus( \offset )\right)^2 - \Mnminus( \offset ) -1 \right] (2\offset -\offset^2 )^{n-1} .
\end{align*}

Using Lemma~\ref{lem:IV} in the last inequality and recalling that 
if $a \in (1,2)$, then $\Mnminus(a) < \Mnminus(1) = 1/2$, we have
$$ (1-\offset )\Mnminus( \offset ) +2\offset -1 >
     \frac 3  2 \offset - \frac 1 2  > 0 
     $$ which gives
\begin{align*}
L(n,\offset ) \ge&  \left( (1-\offset )\Mnminus( \offset ) +2\offset -1\right)  \left( \frac{A_n}{n-1} \right)^2 (2\offset -\offset^2 )^{n-1} \\
&+\left( \frac{A_n}{n-1} \right)^2  (2\offset )\left[ \Mnminus( \offset )-\frac{1}{2} \left( \Mnminus( \offset )\right)^2  \right] (2\offset -\offset^2 )^{n-1} \\
&+\left( \frac{A_n}{n-1} \right)^2 \left[ 2\left( \Mnminus( \offset )\right)^2 - \Mnminus( \offset ) -1 \right] (2\offset -\offset^2 )^{n-1} .
\end{align*}


Thus, combining terms, we obtain a lower bound
\begin{align*}
L(n,\offset)  \ge &\left( \frac{A_n}{n-1} \right)^2 \Big[ (1-\offset )\Mnminus(\offset ) +2\offset -1\\
&+2\offset \Mnminus(\offset ) - \offset \left( \Mnminus(\offset ) \right)^2 \\
& +2\left( \Mnminus(\offset ) \right)^2 -\Mnminus(\offset ) -1 \Big] (2\offset -\offset^2 )^{n-1} \\ 
=&\left( \frac{A_n}{n-1} \right)^2 \left[ \offset \Mnminus(\offset ) +2(\offset -1) +(2-\offset )\left( \Mnminus(\offset ) \right)^2 \right] (2\offset -\offset^2 )^{n-1} ,
\end{align*}
consisting of strictly positive terms if $1<a<2$.

Consequently, we see for $n>3$ and $\offset \in (1,2)$,
\begin{align*}
\frac{\partial}{\partial \offset} E (n, \offset ) >0
\end{align*}

We conclude that for $\offset \in (0,2)$ and $n>3$, $E(n,\offset)$ is strictly increasing, thus attaining its unique minimum at $\offset = 0$. 
\end{proof}

\end{document}